\def\cprime{$'$}
\newtheorem{thm}[lemma]{Theorem}
\def\eqnarray{\stepcounter{equation}\let\@currentlabel=\theequation
\global\@eqnswtrue
\tabskip\@centering\let\\=\@eqncr
$$\halign to \displaywidth\bgroup\hfil\global\@eqcnt\z@
  $\displaystyle\tabskip\z@{##}$&\global\@eqcnt\@ne
  \hfil$\displaystyle{{}##{}}$\hfil
  &\global\@eqcnt\tw@ $\displaystyle{##}$\hfil
  \tabskip\@centering&\llap{##}\tabskip\z@\cr}
\def\endeqnarray{\@@eqncr\egroup
      \global\advance\c@equation\m@ne$$\global\@ignoretrue}
\def\@yeqncr{\@ifnextchar [{\@xeqncr}{\@xeqncr[5pt]}}
\newcommand{\expe}{{\mathrm e}}
\newcommand{\dd}{{\mathrm d}}
\newcommand{\R}{\mathbb{R}}
\newcommand{\C}{\mathbb{C}}
\newcommand{\Z}{\mathbb{Z}}
\newcommand{\N}{\mathbb{N}}
\renewcommand{\r}{\mathbf{r}}
\DeclareMathOperator{\sn}{sn}
\DeclareMathOperator{\cn}{cn}
\DeclareMathOperator{\dn}{dn}
\DeclareMathOperator{\ssc}{sc}
\DeclareMathOperator{\cs}{cs}
\DeclareMathOperator{\ns}{ns}
\DeclareMathOperator{\nc}{nc}
\DeclareMathOperator{\dc}{dc}
\DeclareMathOperator{\sech}{sech}
\DeclareMathOperator{\ds}{ds}
\numberwithin{equation}{section}
\renewcommand{\P}{{\sf P}}
\renewcommand{\r}{\mathbf{r}}
\renewcommand{\u}{\mathbf{u}}
\DeclareMathOperator{\sd}{sd}
\DeclareMathOperator{\GG}{G}
\DeclareMathOperator{\HH}{H}
\numberwithin{equation}{section}
\numberwithin{corollary}{section}
\numberwithin{remark}{section}
\numberwithin{theorem}{section}
\numberwithin{lemma}{section}
\newcommand\moro[1]{{\textcolor{blue}{#1}}}
\begin{document}

\renewcommand{\PaperNumber}{***}

\FirstPageHeading

\ArticleName{Internal and external harmonics in bi-cyclide coordinates}

\ShortArticleName{Internal and external harmonics in bi-cyclide coordinates}

\Author{
Brandon Alexander\,$^\ast$,
Howard S. Cohl\,$^\dag
\!\!\ $
and Hans Volkmer\,$^\S\!\!\ $}

\AuthorNameForHeading{B. Alexander, H.~S.~Cohl, H.~Volkmer}

\Address{$^\ast$ Department of Mathematics,
University of Maryland,
College Park, MD 20742 USA
} 
\EmailD{bralex1@umd.edu} 

\Address{$^\dag$~Applied and Computational Mathematics Division,
National Institute of Standards and Technology,
Gaithersburg, MD 20899-8910, USA
} 
\EmailD{howard.cohl@nist.gov}
\URLaddressD{
\href{http://www.nist.gov/itl/math/msg/howard-s-cohl.cfm}
{http://www.nist.gov/itl/math/msg/howard-s-cohl.cfm}
}

\Address{$^\S$ Department of Mathematical Sciences,
University of Wisconsin-Milwaukee,
Milwaukee, WI 53201-0413, USA
} 
\EmailD{volkmer@uwm.edu} 

\ArticleDates{Received ?? 2022 in final form ????; Published online ????}

\Abstract{
The Laplace equation in three dimensional Euclidean space is $R$-separable in bi-cyclide coordinates
leading to harmonic functions expressed in terms of Lam\'e-Wangerin functions called
internal and external bi-cyclide harmonics.
An expansion for the fundamental solution of Laplace's equation in
products of internal and external bi-cyclide harmonics is derived.
In limiting cases this expansion reduces to known expansion in bi-spherical and prolate spheroidal coordinates.
}

\Keywords{
Laplace's equation; fundamental solution; separable curvilinear coordinate system; bi-cyclide coordinates;
Lam\'e-Wangerin functions.}

\Classification{35A08; 35J05; 33C05; 33C10; 33C15; 33C20; 33C45; 33C47; 33C55; 33C75}

\section{Introduction}

The Laplace equation $\Delta u=0$ is separable in various coordinate systems in three-dimensional Euclidean space
among them the bi-cyclide coordinate system.
One of the most important tasks is to find the expansion of the reciprocal distance of two points in
a series of harmonic functions that are obtained by the method of separation of variables applied to $\Delta u=0$ in  a given coordinate system.
Such expansions are known for several coordinate systems but not for all of them.
In two previous papers \cite{BiCohlVolkmerA}, \cite{BiCohlVolkmerB}, the expansion of the reciprocal distance was given in flat-ring coordinates for the first time. In this paper we derive the expansion of the reciprocal distance in terms of
harmonic functions separated in bi-cyclide coordinates.

Bi-cyclide coordinates were originally introduced by Wangerin \cite{Wangerin1875}. They can also be found in
Miller \cite[p.~211]{Miller} and Moon and Spencer \cite[p.~124]{MoonSpencer} (the connection between these forms of bi-cyclide coordinates is made in the appendix).
In these references the ordinary differential equations obtained by applying the method of separation of variables
to $\Delta u=0$ in bi-cyclide coordinates are given. However, formulas for the internal and external harmonics as well
as the corresponding expansion
of the reciprocal distance in these harmonic functions are missing.
It is the purpose of this paper to supply these missing results.

In Section \ref{coord} we define bi-cyclide coordinates in the form given by Miller
and carry out the process of separation of variables to the Laplace equation. The form of the bi-cyclide coordinates
used by Miller has the advantage that two of the separated ordinary differential equation appear
in the standard form of the Lam\'e equation. This is not the case if the coordinates are given as in
Wangerin or Moon and Spencer.
In Section \ref{LW} we review Lam\'e-Wangerin functions that appear in the definitions
of internal and external bi-cyclide harmonics. Lam\'e-Wangerin functions are particular solutions of
Lam\'e's differential equation that have recessive behavior at two neighboring regular singularities.
Moreover, an estimate for Lam\'e-Wangerin functions is given that is needed to prove convergence of
various series expansions.
In Section \ref{H1} internal and external bi-cyclide harmonics are introduced
and their main properties are established.
In Section \ref{A1} various results involving internal and external bi-cyclide harmonics are
proved. These results include the solution of a Dirichlet problem and an integral representation
of external harmonics in terms of internal harmonics. Finally, as the main result of this paper, the expansion
of the reciprocal distance of two points in a series of internal and external bi-cyclide
harmonics is given. As corollaries we find an addition theorem and integral relations for Lam\'e-Wangerin functions.
In Section \ref{H2} we introduce a second kind of internal and external bi-cyclide harmonics.
In contrast to corresponding results in flat-ring coordinates, these internal and external bi-cyclide harmonics
of the second kind can be reduced to the ones of the first kind by a Kelvin transformation.
In the final two Sections \ref{L0} and \ref{L1} we show that limiting cases of
bi-cyclide coordinates include bi-spherical and prolate spheroidal coordinates. We
connect the expansion of the reciprocal distance in bi-cyclide coordinates to the known expansions in
bi-spherical and prolate spheroidal coordinates.

\section{Bi-cyclide coordinates}\label{coord}

Miller \cite[p.~211, (6.28)]{Miller} introduces bi-cyclide coordinates $\alpha,\beta,\phi$ in $\R^3$ by
\begin{equation}\label{bi-cyclide}
 x=R\cos\phi,\quad y=R\sin\phi,\quad z= ikR\sn(\alpha,k)\sn(\beta,k),
\end{equation}
where
\[ \frac1R=\frac{i}{k'}\left(\dn(\alpha,k)\dn(\beta,k)-k\cn(\alpha,k)\cn(\beta,k)\right).\]
Note that we corrected a typo in the definition of $R$.
These coordinates depend on a given modulus $k\in(0,1)$, and involve the Jacobian elliptic functions $\cn, \sn, \dn$
\cite[Chapter~22]{NIST:DLMF}.
We also use the complementary modulus $k'=\sqrt{1-k^2}$ and the complete elliptic integrals
of the first kind
$K=K(k)$ and $K'=K'(k)=K(k')$.
The complex coordinates $\alpha$ and $\beta$ vary in the segments $\alpha\in(iK',2K+iK')$, $\beta\in(2K-iK',2K+iK')$ and $\phi\in(-\pi,\pi]$.

Bi-cyclide coordinates can also be seen as a coordinate system in the $(R,z)$-plane, where $R=(x^2+y^2)^{1/2}$ denotes the distance of a point $(x,y,z)$ in $\R^3$ to the $z$-axis. Three dimensional bi-cyclide coordinates are then obtained by
adding the rotation angle $\phi$ about the $z$-axis.

We prefer a real version of bi-cyclide coordinates. Setting $\alpha=s+K+iK'$, $\beta=2K+it$ with $s\in(-K,K)$, $t\in (-K',K')$, we obtain
\begin{eqnarray}
&&\hspace{-9.3cm} R=\frac{\cn(s,k)\cn(t,k')}{1-\sn(s,k)\dn(t,k')},\label{RR}\\
&&\hspace{-9.3cm} z=\frac{\dn(s,k)\sn(t,k')}{1-\sn(s,k)\dn(t,k')}.\label{zz}
\end{eqnarray}
In the derivation of \eqref{RR}, \eqref{zz}, standard identities for Jacobian elliptic functions are used
\cite[\S 22.4, \S22.6]{NIST:DLMF}.

The mapping $(s,t)\in(-K,K)\times (-K',K')\mapsto (R,z)\in(0,\infty)\times\R$ is bijective, (real) analytic and its inverse is also analytic.
We omit the proofs of these statements. They are similar to proofs of corresponding statements for flat-ring
coordinates \cite{BiCohlVolkmerA2}. In fact, planar bi-cyclide and planar flat-ring  coordinates are closely related as
can be seen as follows.
Letting $s=\sigma-K$, $\sigma\in(0,2K)$ and $t=K'-\tau$, $\tau\in(0,K')$, we obtain
\[ z=T^{-1},\quad R=(x^2+y^2)^{1/2}=-\frac{ik}{T}\sn(\sigma,k)\sn(i\tau,k), \]
where
\[
T=\frac1{k'}\dn(\sigma,k)\dn(i\tau,k)+\frac{k}{k'}\cn(\sigma,k)\cn(i\tau,k) .
\]
Thus $\sigma,\tau$ are exactly the planar flat-ring coordinates treated in \cite[\S 2.2]{BiCohlVolkmerA2}
except that $R,z$ are interchanged.
Therefore, we can say that planar flat-ring and planar bi-cyclide coordinates are the same (in the first quadrant) but
their three-dimensional versions become different because we rotate about different coordinate axes.

We extend planar bi-cyclide coordinates to the $z$-axis as follows.
We note that the denominator on the right-hand sides of \eqref{RR}, \eqref{zz} is positive on the rectangle $(s,t)\in[-K,K]\times[-K',K']$
with the exception of the point $s=K, t=0$. Therefore, $R,z$ are continuous functions on this rectangle with
the point $(K,0)$ removed. The points on the boundary of the rectangle are mapped to $R=0$.
As we go around the boundary of this rectangle  in a clockwise direction as shown in Figure \ref{fig1}, $z$ transverses the
$z$-axis from $-\infty$ to $+\infty$. The segments $\gamma_j$ are mapped to $\Gamma_j$ for each
$j=1,2,3,4,5$ as shown in Figure \ref{fig2} using the notation
\begin{equation}\label{b}
 b=\frac{1-k}{k'}=\frac{k'}{1+k}\in(0,1) .
\end{equation}

\begin{figure}
\setlength{\unitlength}{2cm}
\thicklines
\begin{center}
\begin{picture}(3.5,2.5)(0,-1.3)
\put(0,-1){\line(1,0){3}}
\put(0,1){\line(1,0){3}}
\put(0,-1){\line(0,1){2}}
\put(3,-1){\line(0,1){2}}
\put(0,1){\circle*{0.1}}
\put(0,-1){\circle*{0.1}}
\put(3,1){\circle*{0.1}}
\put(3,0){\circle*{0.1}}
\put(3,-1){\circle*{0.1}}
\put(0,1){\vector(1,0){1.5}}
\put(3,-1){\vector(-1,0){1.5}}
\put(0,-1){\vector(0,1){1}}
\put(3,1){\vector(0,-1){0.5}}
\put(3,0){\vector(0,-1){0.5}}
\put(-0.3,1.1){$(-K,K')$}
\put(-0.3,-1.25){$(-K,-K')$}
\put(2.7,1.1){$(K,K')$}
\put(2.7,-1.25){$(K,-K')$}
\put(3.1,-0.1){$(K,0)$}
\put(3.1,-0.5){$\gamma_1$}
\put(1.5,-1.25){$\gamma_2$}
\put(-0.3,-0.1){$\gamma_3$}
\put(1.4,1.1){$\gamma_4$}
\put(3.1,0.5){$\gamma_5$}
\end{picture}
\caption{The rectangle $[-K,K]\times [-K',K']$ of coordinates $s,t$.\label{fig1}}
\end{center}
\end{figure}
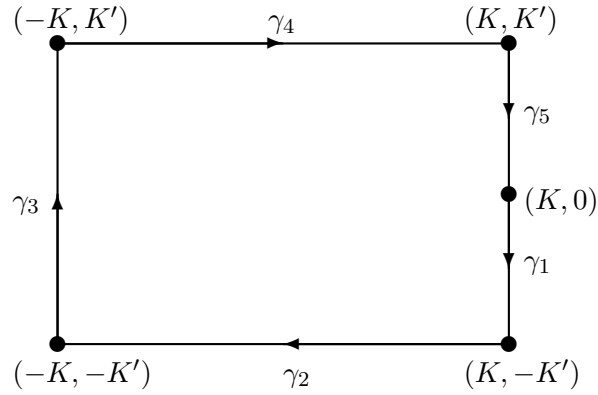

 \begin{figure}
\setlength{\unitlength}{1.8cm}
\thicklines
\begin{center}
\begin{picture}(6,1)(-3,-0.5)
\put(-3,0){\line(1,0){6}}
\put(-2,0){\circle*{0.1}}
\put(-1,0){\circle*{0.1}}
\put(1,0){\circle*{0.1}}
\put(2,0){\circle*{0.1}}
\put(-3,0){\vector(1,0){0.5}}
\put(-2,0){\vector(1,0){0.5}}
\put(-1,0){\vector(1,0){1}}
\put(1,0){\vector(1,0){0.5}}
\put(2,0){\vector(1,0){0.5}}
\put(-2.2,-0.3){$-\frac1b$}
\put(-1.2,-0.3){$-b$}
\put(0.95,-0.3){$b$}
\put(1.95,-0.3){$\frac1b$}
\put(-2.7,0.2){$\Gamma_1$}
\put(-1.7,0.2){$\Gamma_2$}
\put(-0.2,0.2){$\Gamma_3$}
\put(1.3,0.2){$\Gamma_4$}
\put(2.3,0.2){$\Gamma_5$}
\end{picture}
\caption{Bi-cyclide coordinates on the $z$-axis.\label{fig2}}
\end{center}
\end{figure}
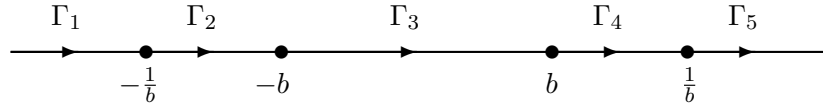

For $s_0\in(-K,K)$ and $t_0\in(-K',K')$ we introduce the polynomials
\begin{eqnarray}
&&\hspace{-1.8cm}P_1(x,y,z)= \sn^2(t_0,k')(R^2+z^2+1)^2-k^2\sd(t_0,k')^2(R^2+z^2-1)^2-4z^2,\label{P1}\\
&&\hspace{-1.8cm}P_2(x,y,z)= \sn^2(s_0,k) (R^2+z^2+1)^2-(R^2+z^2-1)^2-4k'^2\sd^2(s_0,k)z^2\label{P2},
\end{eqnarray}
where we used Glaisher's notation for the Jacobi elliptic functions \cite[(22.2.10)]{NIST:DLMF}.
If $s,t$ are bi-cyclide coordinates of $(x,y,z)$ then
\begin{equation}\label{sums}
 R^2+z^2+1=\frac{2}{1-\sn(s,k)\dn(t,k')},\quad R^2+z^2-1=\frac{2\sn(s,k)\dn(t,k')}{1-\sn(s,k)\dn(t,k')},
 \end{equation}
so a computation gives
\begin{eqnarray}
&&\hspace{-3.2cm}P_1(x,y,z)= \frac{4(\sn^2(t_0,k')-\sn^2(t,k'))(\dn^2(t_0,k')-k^2\sn^2(s,k))}{\dn^2(t_0,k')(1-\sn(s,k)\dn(t,k'))^2},
\label{P1a}\\
&&\hspace{-3.2cm}P_2(x,y,z)= \frac{4(\sn^2(s_0,k)-\sn^2(s,k))(\dn^2(t,k')-k^2\sn^2(s_0,k))}{\dn^2(s_0,k)(1-\sn(s,k)\dn(t,k'))^2}.
\label{P2a}
\end{eqnarray}
Therefore, $P_1(x,y,z)=0$ if and only if $t=t_0$ or $t=-t_0$, and $P_2(x,y,z)=0$ if and only if
$s=s_0$ or $s=-s_0$.

Figure \ref{fig3} depicts coordinate lines of planar bi-cyclide coordinates.
The coordinate lines $s=s_0$ and $t=t_0$ are shown in blue and red, respectively.
The coordinate line $t=0$ is the positive $R$-axis, and the coordinate line $s=0$ is half the unit circle.
The mapping $(s,t)\to (s,-t)$ corresponds to $(R,z)\to(R,-z)$, and $(s,t)\to(-s,t)$ corresponds to
$(R,z)\to (R^2+z^2)^{-1}(R,z)$, the inversion at the unit circle. The rectangle $(s,t)\in(0,K)\times (0,K')$\
corresponds to the region $\{(R,z): R,z>0, R^2+z^2>1\}$.
Figure \ref{fig3} also shows the position of the four points $(0,\pm b)$, $(0,\pm b^{-1})$ on the $z$-axis.

\begin{figure}[ht]
\begin{center}
\includegraphics[clip=true,trim={0 17cm 0 1cm},width=20cm]{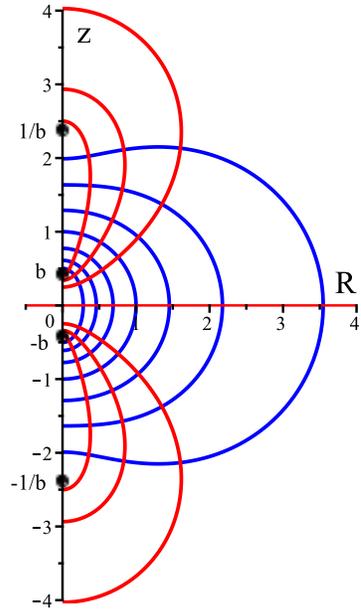}
\caption{Coordinate lines $s=\pm \frac35K,\pm \frac25K,\pm \frac15K,0$ in blue and $t=\pm \frac45K',\pm\frac35K',\frac25K',0$
in red for bi-cyclide coordinates with $k=0.7$.
\label{fig3}}
\end{center}
\end{figure}

The Laplace equation $\Delta u=0$ has ${\mathcal R}$-separated solutions
\[ u(x,y,z)= R^{-1/2} u_1(\alpha)u_2(\beta) \expe^{im\phi},\quad R=(x^2+y^2)^{1/2},
\]
where $u_1$ and $u_2$ satisfy the
ordinary differential equation
\begin{equation}\label{lame0}
\frac{d^2w}{d\zeta^2}+\left(\lambda-(m^2-\tfrac14)k^2 \sn^2(\zeta,k)\right) w =0 .
\end{equation}
This is stated in \cite[p.~211, (6.28)]{Miller} and will be confirmed in Theorem \ref{separation} below.

The Lam\'e equation is
\begin{equation}\label{lame}
 \frac{d^2w}{d\zeta^2}+(\lambda-\nu(\nu+1)k^2\sn^2(\zeta,k))w=0 .
\end{equation}
So \eqref{lame0} is the Lam\'e equation with $\nu=|m|-\frac12$.

If we write $v_1(s)=u_1(s+K+iK')$ and $v_2(t)=u_2(2K+it)$, we obtain the differential equations
\begin{eqnarray}\label{lame1}
&&\hspace{-7cm}\frac{d^2v_1}{ds^2}+(\lambda-(m^2-\tfrac14)\dc^2(s,k)) v_1=0,\\
&&\hspace{-7cm}\frac{d^2v_2}{dt^2}-(\lambda+(m^2-\tfrac14)k^2\ssc^2(t,k')) v_2=0.\label{lame2}
\end{eqnarray}
Using $\dc^2(s,k)=1+k'^2\ssc^2(s,k)$ we see that equation \eqref{lame1} is the same as \eqref{lame2} with $k$ replaced by $k'$ and $\lambda$ replaced by $\nu(\nu+1)-\lambda$. We summarize the result in the following theorem.

\begin{thm}\label{separation}
If $m\in\Z$, $\lambda\in\R$, $v_1$ solves \eqref{lame1} on $(-K,K)$ and $v_2$ solves \eqref{lame2} on $(-K',K')$. Then
\begin{equation}\label{separation1}
u(x,y,z)=R^{-1/2}v_1(s)v_2(t)\expe^{im\phi}
\end{equation}
is a harmonic function in $\R^3\setminus\{(0,0,z): z\in\R\}$.
\end{thm}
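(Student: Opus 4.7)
The plan is to substitute the ansatz \eqref{separation1} into Laplace's equation written in cylindrical coordinates $(R,\phi,z)$ and reduce $\Delta u=0$ to a two-dimensional equation in $(R,z)$ that separates in the variables $(s,t)$.

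First I would strip off the $R^{-1/2}$ prefactor. For any smooth $F=F(R,\phi,z)$, a short product-rule computation using $\Delta R^{-1/2}=\tfrac14 R^{-5/2}$ together with the cylindrical Laplacian $\Delta=\partial_R^2+R^{-1}\partial_R+\partial_z^2+R^{-2}\partial_\phi^2$ yields
\[ \Delta\bigl(R^{-1/2}F\bigr)=R^{-1/2}\bigl[F_{RR}+F_{zz}+R^{-2}F_{\phi\phi}+\tfrac14 R^{-2}F\bigr]. \]
Taking $F=v_1(s)v_2(t)\expe^{im\phi}$ and using $F_{\phi\phi}=-m^2F$ reduces $\Delta u=0$ to the two-dimensional equation
\[ f_{RR}+f_{zz}-(m^2-\tfrac14)R^{-2}f=0,\qquad f(R,z):=v_1(s)v_2(t). \]

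Next I would check that the planar map $(s,t)\mapsto(R,z)$ defined by \eqref{RR}, \eqref{zz} is conformal. Using the Jacobi derivative formulas \cite[\S22.8]{NIST:DLMF}, a direct computation verifies
\[ R_sR_t+z_sz_t=0,\qquad R_s^2+z_s^2=R_t^2+z_t^2=:h^2. \]
Since a $2$D conformal chart has harmonic coordinate functions $s(R,z)$ and $t(R,z)$, the chain rule gives $f_{RR}+f_{zz}=h^{-2}(v_1''(s)v_2(t)+v_1(s)v_2''(t))$, so the PDE becomes
\[ \frac{v_1''(s)}{v_1(s)}+\frac{v_2''(t)}{v_2(t)}=(m^2-\tfrac14)\,\frac{h^2}{R^2}. \]

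The crux of the argument is then the St\"ackel-type identity
\[ \frac{h^2}{R^2}=\dc^2(s,k)+k^2\ssc^2(t,k'), \]
which I expect to be the main obstacle: it must be verified by substituting \eqref{RR}, \eqref{zz}, differentiating, and collapsing the resulting rational expression in $\sn,\cn,\dn$ at moduli $k$ and $k'$ through a fairly long chain of Jacobian elliptic identities from \cite[\S22.4, \S22.6]{NIST:DLMF}. Once this identity is in hand the right-hand side splits as a function of $s$ alone plus a function of $t$ alone, so the standard separation-of-variables argument applies: setting
\[ \frac{v_1''(s)}{v_1(s)}-(m^2-\tfrac14)\dc^2(s,k)\;=\;-\lambda\;=\;-\frac{v_2''(t)}{v_2(t)}+(m^2-\tfrac14)k^2\ssc^2(t,k') \]
recovers \eqref{lame1} and \eqref{lame2} exactly. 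Since the bi-cyclide chart is a real-analytic bijection of $(-K,K)\times(-K',K')\times(-\pi,\pi]$ onto $\R^3\setminus\{(0,0,z):z\in\R\}$, this verifies $\Delta u=0$ on the claimed domain.
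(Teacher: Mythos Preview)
Your proposal is correct and follows essentially the same route as the paper: strip off $R^{-1/2}$, use that the planar map $(s,t)\mapsto(R,z)$ is conformal (the paper phrases this as $h_s=h_t$), and then invoke the metric identity $h^2/R^2=\dc^2(s,k)+k^2\ssc^2(t,k')$ to separate. The only difference is emphasis: the paper simply records this identity as the formula \eqref{metric} for the metric coefficients and does not prove it, so the computation you flag as the main obstacle is left to the reader there as well.
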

\begin{proof}
The metric coefficients of bi-cyclide coordinates are given by $h_\phi=R$ and
\begin{equation}\label{metric}
 h_s=h_t=R\,(\dc^2(s,k)+k^2\ssc^2(t,k'))^{1/2} .
 \end{equation}
In cylindrical coordinates $R,z,\phi$, the Laplace equation $\Delta u=0$ takes the form
\[ \frac{\partial^2 v}{\partial R^2}+\frac{\partial^2 v}{\partial z^2}+R^{-2}\left(\frac{\partial^2 v}{\partial \phi^2}+\frac14v\right)=0,\]
where $u=R^{-1/2} v$. Using $h_s=h_t$ this equation transforms to
\[ \frac{\partial^2 v}{\partial s^2}+\frac{\partial^2 v}{\partial t^2}+R^{-2} h_s^2\left(\frac{\partial^2v}{\partial\phi^2}+\frac14 v\right) =0.\]
We now easily confirm that $v_1(s)v_2(t) \expe^{im\phi}$ satisfies this equation.
\end{proof}

\section{Lam\'e-Wangerin functions}\label{LW}

We recall the Lam\'e-Wangerin eigenvalue problem. The Lam\'e equation \eqref{lame} has regular singular points at $\zeta=iK'$ and $\zeta=2K+iK'$ with exponents $-\nu$ and $\nu+1$ at both points. The eigenvalue problem asks for solutions of
\eqref{lame} (with $\nu\ge -\frac12$) on the segment $\zeta\in(iK',2K+iK')$ which belong to the exponent $\nu+1$
at both end points $iK'$ and $2K+iK'$.
In \cite[\S15.6]{ErdelyiHTFIII} the eigenfunctions of this eigenvalue problem are denoted by
$F_\nu^n(\zeta,k^2)$. Alternatively, in \cite{BiCohlVolkmerB} we used the notation $W_\nu^n(s,k)=F_\nu^n(s+K+iK',k^2)$.

We list the most important properties of the Lam\'e-Wangerin functions $W_\nu^n(s,k)$, $\nu\ge -\frac12$, $n\in\N_0$.
See \cite{Volkmer2018} for further details.\\[0.2cm]

\noindent (a)
The function $W_\nu^n(s,k)$ is real-valued on the interval $s\in(-K,K)$ and has exactly $n$ zeros in this open interval.
In addition, $W_\nu^n(s,k)\to 0$ as $s\to\pm K$.

\noindent (b)
For $s<K$ close to $K$ we have the expansion
\begin{equation}\label{propb}
 W_\nu^n(s,k)=\sum_{\ell=0}^\infty c_\ell (K-s)^{\nu+1+2\ell},
\end{equation}
with real coefficients $c_\ell$ and $c_0\ne 0$.

\noindent (c)
The function $W_\nu^n(s,k)$ is even/odd with $n$:
\begin{equation}\label{evenodd}
W_\nu^n(-s,k)=(-1)^n W^n_\nu(s,k).
\end{equation}

\noindent (d)
For every fixed $\nu\ge -\frac12$ and $k\in(0,1)$, the sequence of functions $\{W_\nu^n(s,k)\}_{n\in\N_0}$ forms an orthonormal basis of
the Hilbert space $L^2(-K,K)$.

\noindent (e) The function $W(s)=W_\nu^n(s)$ satisfies differential equation
\begin{equation}\label{odeW}
 \frac{d^2W}{ds^2}+\left(\Lambda_\nu^n(k)-\nu(\nu+1)\dc^2(s,k)\right) W =0,
 \end{equation}
where $\Lambda_\nu^n(k)$ denotes an eigenvalue. Some properties of these eigenvalues are given in \cite[\S 2]{BiCohlVolkmerB}.

\noindent (f)
The function $W_\nu^n(s)$ can be continued analytically to an analytic function in the strip $|\Im s|<2K'$\
with branch cuts $(-\infty,-K]$ and $[K,+\infty)$ removed.

It should be mentioned that there are no explicit formulas for the Lam\'e-Wangerin functions $W_\nu^n$ nor for the eigenvalues
$\Lambda_\nu^n$. However, efficient methods for their numerical computation are available.

In the application to bi-cyclide coordinates we use Lam\'e-Wangerin functions $W_\nu^n(s,k)$
not only for $s\in(-K,K)$ but also for complex $s$ with real part $-K$ or $K$.
This is an important difference to the application of Lam\'e-Wangerin functions to flat-ring coordinates \cite{BiCohlVolkmerB}.
In that case, $W_\nu^n(s,k)$ was used for purely imaginary~$s$.
The following two lemmas state a property of Lam\'e-Wangerin functions with complex argument that is required in the subsequent analysis.

\begin{lemma}\label{estimate1}
Let $\nu\ge 0$, $n\in\N_0$, $k\in(0,1)$.

\noindent(i)
$W_\nu^n(K+ir,k)\ne 0$ for all $r\in(0,2K')$.

\noindent(ii)
If $0<r_1<r_2<2K'$ then
\[ 0<\frac{W_\nu^n(K+ir_1,k)}{W_\nu^n(K+ir_2,k)}\le 2 \expe^{-\omega(n+\nu+1)(r_2-r_1)},
\quad\text{where $\omega:=\frac{\pi}{2K}$}.
\]
\end{lemma}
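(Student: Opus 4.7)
The plan is to pass to a real-valued function on $(0,2K')$ that satisfies a real linear ODE with nonnegative coefficient, and then use a convexity argument for (i) and a Sturm comparison with $\sinh$ for (ii).

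For the setup, I would expand $W_\nu^n$ at $s=K$ via \eqref{propb}; the identity $(-ir)^{\nu+1+2\ell}=(-i)^{\nu+1}(-1)^\ell r^{\nu+1+2\ell}$ lets me write $W_\nu^n(K+ir,k)=(-i)^{\nu+1}V_0(r)$, where
\[
V_0(r):=\sum_{\ell=0}^\infty(-1)^\ell c_\ell\,r^{\nu+1+2\ell}
\]
is real-analytic near $r=0$ and, by property~(f), extends real-analytically to all of $(0,2K')$. Fix the sign so $c_0>0$; then $V_0(r)\sim c_0 r^{\nu+1}>0$ as $r\to 0^+$. The shift identity $\dc(K+u,k)=-\ns(u,k)$ (from the standard shift formulas for $\cn(K+u,k)$ and $\dn(K+u,k)$) combined with Jacobi's imaginary transformation $\ns(ir,k)=-i\cs(r,k')$ gives $\dc^2(K+ir,k)=-\cs^2(r,k')$. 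Computing $V_0''$ via the chain rule and \eqref{odeW} yields
\[
V_0''(r)=q(r)V_0(r),\qquad q(r):=\Lambda_\nu^n(k)+\nu(\nu+1)\cs^2(r,k'),
\]
and $q(r)\geq 0$ because $\cs^2\geq 0$ and the Rayleigh quotient for the Lam\'e-Wangerin problem gives $\Lambda_\nu^n(k)\geq 0$ (the potential $\nu(\nu+1)\dc^2(s,k)$ is nonnegative for $\nu\geq 0$).

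For part (i), $V_0''\geq 0$ wherever $V_0>0$ makes $V_0$ convex on each such interval. A first zero at some $r_0\in(0,2K')$ would yield a positive convex function on $(0,r_0)$ with vanishing limits at both endpoints, which by the chord inequality forces $V_0\leq 0$ on $(0,r_0)$, a contradiction. Hence $V_0>0$, equivalently $W_\nu^n(K+ir,k)\neq 0$, throughout $(0,2K')$.

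For part (ii), put $c:=\omega(n+\nu+1)$ and $\phi(r):=\sinh(cr)$. The key auxiliary input is the spectral lower bound $\Lambda_\nu^n(k)\geq c^2=\omega^2(n+\nu+1)^2$, which I would invoke from the Sturm-Liouville analysis in \cite[\S2]{BiCohlVolkmerB}. Given this, $q(r)\geq c^2$, so the Wronskian $\mathscr{W}(r):=V_0'(r)\phi(r)-V_0(r)\phi'(r)$ satisfies $\mathscr{W}'=(q-c^2)V_0\phi\geq 0$ with $\mathscr{W}(0^+)=0$ (from $V_0\sim c_0 r^{\nu+1}$ and $\phi\sim cr$), so $\mathscr{W}\geq 0$. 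Rearranged, this gives $V_0'/V_0\geq \phi'/\phi=c\coth(cr)$; integration from $r_1$ to $r_2$ yields $V_0(r_1)/V_0(r_2)\leq\sinh(cr_1)/\sinh(cr_2)$, and the elementary inequality $\sinh(x)/\sinh(y)\leq 2e^{x-y}$ for $0<x<y$ completes the proof. The main obstacle is the sharp eigenvalue bound $\Lambda_\nu^n(k)\geq\omega^2(n+\nu+1)^2$; the straightforward Rayleigh estimate gives only the weaker $\Lambda_\nu^n(k)\geq(n+1)^2\omega^2+\nu(\nu+1)$, so this step is where the refined Sturm-Liouville bounds of the companion paper are essential.
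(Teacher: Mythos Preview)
Your proof is correct and follows essentially the same strategy as the paper's: pass to a real function $w$ satisfying $w''=qw$ with $q(r)=\Lambda_\nu^n(k)+\nu(\nu+1)\cs^2(r,k')\ge\omega^2(n+\nu+1)^2$ (the eigenvalue bound from \cite[Lemma~2.3]{BiCohlVolkmerB}), and then use a comparison argument. The only cosmetic differences are that for (i) the paper argues directly that $w,w'>0$ persist from $w''=qw\ge0$ rather than via convexity, and for (ii) it runs a Riccati comparison $w'/w\ge\gamma\tanh(\gamma(r-r_1))$ starting at $r_1$ (yielding the bound $1/\cosh(\gamma(r_2-r_1))$) instead of your Wronskian comparison with $\sinh(cr)$ starting at $r=0$ (yielding $\sinh(cr_1)/\sinh(cr_2)$); both give the same final estimate.
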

\begin{proof}
The function $W_\nu^n(K+ir,k)$, $r\in(0,2K')$, is usually not real-valued.
However, it follows from \eqref{propb} that we can write
$W_\nu^n(K+ir,k)= C w(r)$ with a suitable complex constant $C$
such that $w$ is real-valued and has the expansion
\begin{equation}\label{expw}
 w(r)=\sum_{\ell=0}^\infty d_\ell r^{\nu+1+2\ell} \quad\text{with $d_\ell\in\R$, $d_0=1$},
\end{equation}
for small $r>0$.
We will replace $W_\nu^n(K+ir,k)$ by $w(r)$ in the proof.
Now \eqref{odeW} gives
\begin{equation}\label{odew}
 w''=q(r)w,\quad q(r)=\Lambda_\nu^n(k)+\nu(\nu+1)\cs^2(r,k') .
\end{equation}
Using $\nu\ge 0$ and \cite[Lemma 2.3]{BiCohlVolkmerB}, we find
\[ q(r)\ge \Lambda_\nu^n(k)\ge \gamma^2,\quad \gamma:=\omega(n+\nu+1)>0. \]
It follows from \eqref{expw} that $w(r)>0$ and $w'(r)>0$ for small $r>0$ so \eqref{odew} and $q(r)>0$ imply $w(r)>0$ and $w'(r)>0$ for all $r\in(0,2K')$.
Now $u=w'/w$ satisfies the Riccati equation $u'+u^2=q(r)$, so by comparison with the equation $v'+v^2=\gamma^2$,
\[ u(r)\ge \gamma\tanh(\gamma(r-r_1))\quad\text{for $r\ge r_1$}.\]
Integrating from $r=r_1$ to $r=r_2$ gives
\[ \ln \frac{w(r_2)}{w(r_1)}\ge \ln\cosh(\gamma(r_2-r_1))\ge \ln\left(\tfrac12 \expe^{\gamma(r_2-r_1)}\right), \]
as desired.
\end{proof}

The proof of Lemma \ref{estimate1} does not work for negative $\nu$. When working with bi-cyclide coordinates we only
use $\nu$ of the form $\nu=m-\frac12$ with $m\in\N_0$, so it is sufficient to treat the case $\nu=-\frac12$ in the following lemma.

\begin{lemma}\label{estimate2}
Let $k\in(0,1)$.

\noindent (i)
For each $n\in\N_0$, $r\in(0,2K')$, $W_{-1/2}^n(K+ir,k)\ne 0$.

\noindent (ii)
Let $0<r_1<r_2<2K'$. Then there exist positive constants $N$ such that
\[  0<\frac{W_{-1/2}^n(K+ir_1,k)}{W_{-1/2}^n(K+ir_2,k)}\le  2\expe^{-\tfrac{\sqrt3}{2}\omega(n+\frac12)(r_2-r_1)}
\quad\text{for $n\ge N$}.
\]
\end{lemma}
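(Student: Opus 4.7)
I would follow the scheme of the proof of Lemma~\ref{estimate1}, adjusting for the sign $\nu(\nu+1)=-\tfrac14<0$ occurring when $\nu=-\tfrac12$. Write $W_{-1/2}^n(K+ir,k)=Cw(r)$ with $C\in\C$ chosen so that $w$ is real-valued on $(0,2K')$ and admits the expansion $w(r)=r^{1/2}+O(r^{5/2})$ as $r\to 0^+$; this is possible by~\eqref{propb}. Then $w$ satisfies $w''=q(r)w$ with $q(r)=\Lambda_{-1/2}^n(k)-\tfrac14\cs^2(r,k')$. The essential obstruction is that $q(r)\to-\infty$ at the endpoints $r=0$ and $r=2K'$, so the uniform bound $q\geq\gamma^2$ central to the proof of Lemma~\ref{estimate1} is not directly available.

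For part~(i) I would introduce the substitution $w(r)=\sqrt{\sn(r,k')}\,\phi(r)$, which, after a direct computation using the identities $\cn'=-\sn\dn$, $\dn'=-k'^2\sn\cn$ and $\dn^2+k'^2\sn^2=1$, converts the singular ODE for $w$ into the regular Sturm-Liouville form
\[
\bigl(\sn(r,k')\,\phi'(r)\bigr)' = \sn(r,k')\,Q_n(r)\,\phi(r), \qquad Q_n(r):=\Lambda_{-1/2}^n(k)+\tfrac12\dn^2(r,k')+\tfrac14 k'^2\cn^2(r,k').
\]
Series expansions give $\phi(0^+)=1$ and $\sn(r,k')\phi'(r)\to 0$ as $r\to 0^+$. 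A short calculation shows that $Q_n$ attains its minimum $\Lambda_{-1/2}^n(k)+\tfrac{k^2}{2}$ at $r=K'$; combined with a Sturm-Liouville lower bound $\Lambda_{-1/2}^n(k)>-\tfrac{k^2}{2}$ on the Lam\'e-Wangerin eigenvalues (analogous to the inputs used in the proof of Lemma~\ref{estimate1}), we obtain $Q_n>0$ on $(0,2K')$. A first hypothetical zero $r^*$ of $\phi$ then forces $(\sn\phi')'=\sn\,Q_n\,\phi>0$ on $(0,r^*)$, so $\sn\phi'>0$ and $\phi$ is strictly increasing, contradicting $\phi(r^*)=0<1=\phi(0^+)$. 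Hence $\phi>0$, equivalently $w>0$, on $(0,2K')$.

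For part~(ii), set $\omega:=\pi/(2K)$ as in Lemma~\ref{estimate1}. On the compact subinterval $[r_1,r_2]\subset(0,2K')$ the function $\cs^2(r,k')$ is bounded by some $M=M(r_1,r_2,k)$, and the eigenvalue lower bound analogous to \cite[Lemma~2.3]{BiCohlVolkmerB} for $\nu=-\tfrac12$ yields $\Lambda_{-1/2}^n(k)\geq\omega^2(n+\tfrac12)^2-O(1)$. Choosing $N$ sufficiently large, for all $n\geq N$ we get
\[
q(r)\geq \Lambda_{-1/2}^n(k)-\tfrac{M}{4}\geq\tfrac34\omega^2(n+\tfrac12)^2=\gamma^2,\qquad r\in[r_1,r_2],
\]
with $\gamma:=\tfrac{\sqrt 3}{2}\omega(n+\tfrac12)$. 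Put $u:=w'/w$, well-defined by~(i) and satisfying the Riccati equation $u'+u^2=q$. Since $w(r)\sim r^{1/2}$ forces $u(r)\sim 1/(2r)\to+\infty$ as $r\to 0^+$, and since for $n$ large the left turning point $r_-^{(n)}$ of $q$ (defined by $q(r_-^{(n)})=0$) scales like $1/(2\sqrt{\Lambda_{-1/2}^n(k)})\to 0$, tracking the Riccati equation from the thin classically-allowed region $[0,r_-^{(n)}]$ across the turning point shows that $u(r_1)\approx\sqrt{q(r_1)}>0$; hence $u(r_1)\geq 0$ for $n\geq N$ after possibly enlarging $N$. Comparing $u'+u^2=q\geq\gamma^2$ with $v'+v^2=\gamma^2$, $v(r_1)=0$, yields $u(r)\geq\gamma\tanh(\gamma(r-r_1))$ on $[r_1,r_2]$, and integrating gives
\[
\ln\frac{w(r_2)}{w(r_1)}\geq \ln\cosh(\gamma(r_2-r_1))\geq\gamma(r_2-r_1)-\ln 2,
\]
which rearranges to the desired inequality. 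The principal difficulty is two-fold: managing the endpoint singularities of $q$ via the substitution $w=\sqrt{\sn(r,k')}\,\phi$ in~(i), and verifying $u(r_1)\geq 0$ through turning-point analysis in~(ii), so that the Riccati-to-$\tanh$ comparison of Lemma~\ref{estimate1} can be reapplied verbatim.
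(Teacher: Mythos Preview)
Your part~(i) is essentially the paper's argument: the substitution $w=\sn^{1/2}(r,k')\,\phi$ is exactly what the authors use (they write the transformed equation in standard form $\phi''+\ds\cn\,\phi'-p\phi=0$ rather than self-adjoint form, and your $Q_n$ coincides with their $p$ after the identities $\dn^2=1-k'^2\sn^2$, $\cn^2=1-\sn^2$). One point you leave implicit but which becomes essential later: from $(\sn\,\phi')'=\sn\,Q_n\,\phi>0$ with $\sn\,\phi'\to 0$ at $r=0$ you get $\phi'>0$ on $(0,2K')$, and hence $w'=\tfrac12\sn^{-1/2}\cn\,\dn\,\phi+\sn^{1/2}\phi'>0$ on $(0,K')$ (where $\cn>0$), while on $(K',2K')$ nothing can be said. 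The paper records this explicitly.

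In part~(ii) your strategy is the same Riccati comparison as the paper's, but your justification of $u(r_1)\ge 0$ is the weak link. The appeal to ``turning-point analysis'' and the heuristic $u(r_1)\approx\sqrt{q(r_1)}$ is WKB intuition, not a proof; moreover you call $[0,r_-^{(n)}]$ the classically allowed region when it is the forbidden one ($q<0$ there), and on that interval $u'=-u^2+q<-u^2$ gives no lower control on $u$. The paper avoids this entirely by a clean two-step argument: first, from part~(i) one already has $w'>0$, hence $u>0$, on $(0,K']$; second, one exhibits an interval $I_n=[\lambda_n^{-1/2}K',\,2K'-\lambda_n^{-1/2}K']$ on which the concavity bound $\sn(r,k')\ge r/K'$ (for $r\le K'$, and by symmetry for $r\ge K'$) forces $q(r)\ge\tfrac34\lambda_n+\tfrac14>0$. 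Since $K'\in I_n$ and $w''=qw>0$ there, $w'$ stays positive throughout $(0,K')\cup I_n$, and for $n$ large this set contains $[r_1,r_2]$. Now $u(r_1)>0$ is immediate and the $\tanh$ comparison goes through with $\gamma^2=\tfrac34\lambda_n+\tfrac14\ge\tfrac34\omega^2(n+\tfrac12)^2$.

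Your fixed-interval bound $q\ge\lambda_n-\tfrac{M}{4}$ on $[r_1,r_2]$ is fine for the comparison step itself, but to close the argument you should replace the turning-point hand-wave by the observation above: enlarge the interval on which you bound $q$ from below to contain $K'$ (e.g.\ take $[\min(r_1,K'),r_2]$), and import $u(K')>0$ from part~(i). That is the missing idea.
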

\begin{proof}
As in the proof of Lemma \ref{estimate1} we replace $W_{-1/2}^n(K+ir,k)$ by the function $w_n(r)$
which satisfies differential equation \eqref{odew} with $\nu=-\frac12$ and admits the expansion \eqref{expw} with $\nu=-\frac12$. We abbreviate $\lambda_n=\Lambda_{-1/2}^n(k)$.

\noindent (i) We set $w_n(r)=\sn^{1/2}(r,k')u_n(r)$, $r\in(0,2K')$. Then equation \eqref{odew} transforms to
\begin{equation}\label{odeu}
 u_n''+\ds(r,k')\cn(r,k')u_n'- p(r)u_n=0,
\end{equation}
where
\[ p(r)=\lambda_n-\tfrac34 k'^2\sn^2(r,k')+\tfrac14 k'^2+\tfrac12 .\]
By \cite[Lemma 2.3]{BiCohlVolkmerB}, $\lambda_n\ge \frac12\omega^2-\frac14$.
Since $\omega^2>k'>k'^2$, this gives
\[ p(r)\ge \tfrac12 k'^2-\tfrac14-\tfrac34 k'^2+\tfrac14k'^2+\tfrac12=\tfrac14>0\quad\text{for $r\in(0,2K')$}.\]
Now \eqref{odeu} yields $u_n(r)=1+cr^2+\dots$ for $r$ close to $0$ with $c=\frac1{16}(2 + 4\lambda_n +k'^2)>0$,
so $u_n(r)>0$, $u_n'(r)>0$ for small positive $r$.
Since $p(r)>0$, equation \eqref{odeu}
shows that $u_n(r)>0$, $u_n'(r)>0$ for all $r\in(0,2K')$. Therefore, $w_n(r)>0$ for $r\in(0,2K')$ and
$w_n'(r)>0$ for $r\in(0,K')$. Note that we cannot show that $w_n'(r)>0$ for all $r\in(0,2K')$ because
$w_n(r)\to0$ as $r\to 2K'$  (the regular singularity $r=2K'$ of \eqref{odew} has two negative exponents $-\frac12,-\frac12$.)
This proves (i).

\noindent (ii)
We are using equation \eqref{odew}. Let $N$ be so large that $\lambda_n>1$ for $n\ge N$.
For $n\ge N$, we consider the interval
\[ I_n:=\left[\lambda_n^{-1/2} K',2K'-\lambda_n^{-1/2}K'\right] .\]
Since $\sn(r,k')$ is a concave function of $r\in[0,2K']$, we have $\sn(r,k')\ge \frac{r}{K'}$ for $r\in[0,K']$.
Therefore,
\[ q(r)\ge  \lambda_n+\tfrac14-\frac{K'^2}{4r^2}\quad\text{for $r\in(0,K']$}.  \]
This implies that
\begin{equation}\label{estq}
 q(r)\ge \tfrac34\lambda_n+\tfrac14>0\quad\text{for $r\in I_n$} .
\end{equation}
In (i) we proved that $w_n$ and $w_n'$ are positive on the interval $(0,K']$, so \eqref{odew} and \eqref{estq}
show that $w_n$ and $w_n'$ are positive on $(0,K')\cup I_n$.
Now choose $N$ so large that $r_1,r_2\in I_n$ for $n\ge N$. Arguing as in the proof of Lemma \ref{estimate2}, we obtain from \eqref{odew} and \eqref{estq} that
\[ 0<\frac{w_n(r_1)}{w_n(r_2)}\le 2\exp\left(-\tfrac{\sqrt3}{2}(\lambda_n+\tfrac14)^{1/2}(r_2-r_1)\right) .\]
This together with \cite[Lemma 2.3]{BiCohlVolkmerB} yields the desired estimate.
\end{proof}

\section{Harmonics of the first kind}\label{H1}

The coordinate surface $t=0$ is the plane $z=0$. If $t_0\in(0,K')$
then the closed coordinate surface $t=t_0$ is the part of the cyclidic surface $P_1(x,y,z)=0$ with $P_1$ defined in \eqref{P1}
which lies in the half-space $z>0$.
Similarly, if $t_0\in(-K',0)$ then the coordinate surface $t=t_0$ is given
by the part of the  surface $P_1(x,y,z)=0$ which lies in the half-space $z<0$. These surfaces are shown in red in
Figure~\ref{fig4}.

Let $t_0\in(0,K')$. Then the bounded domain $D_1$ interior to the surface $t=t_0$ is given by $t\in(t_0,K']$ in bi-cyclide coordinates and by
\[ D_1=\{(x,y,z): P_1(x,y,z)<0, z>0\} \]
in Cartesian coordinates. Its boundary is the coordinate surface $t=t_0$.

\begin{figure}[ht]
\centering
\includegraphics[clip=true,trim={0.8cm 5cm 0.6cm 5cm},width=12cm]{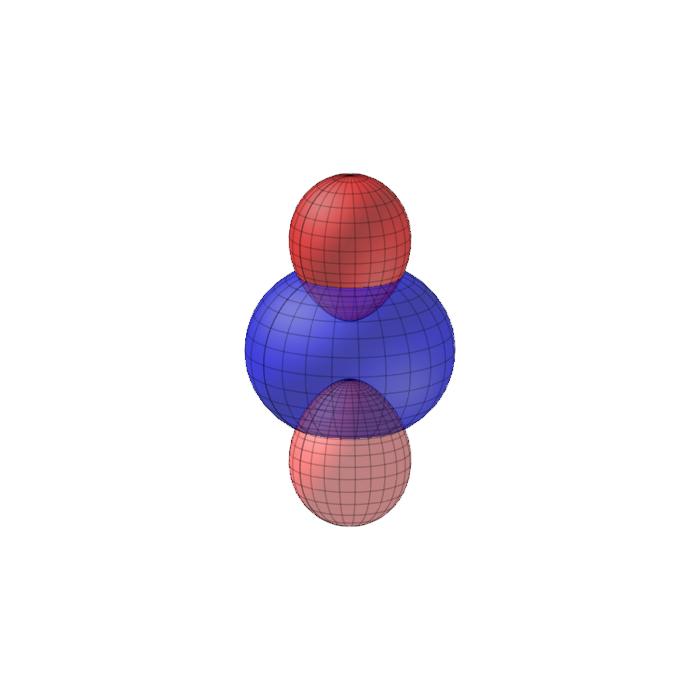}
\caption{Coordinate surfaces $s=0.2K$ in blue and $t=\pm 0.5K'$ in red of system \eqref{bi-cyclide} with $k=0.5$.\label{fig4}}
\end{figure}

\begin{figure}[ht]
\centering
   \includegraphics[clip=true,trim={0.5cm 0.5cm 0.5cm 0.5cm},width=15cm]{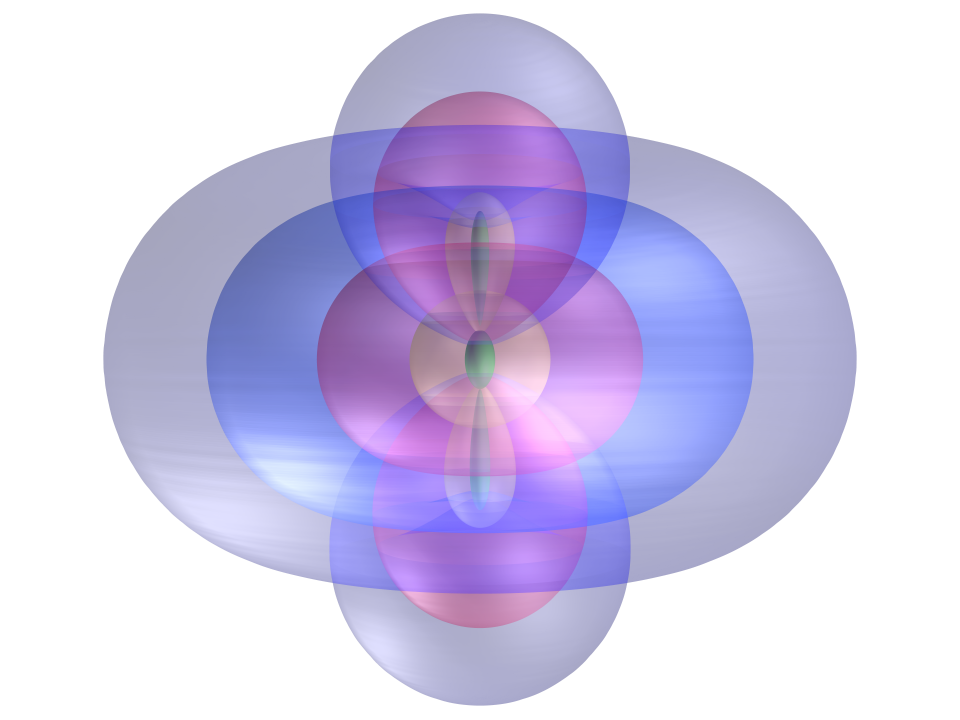}
   \caption{{For $k=0.7$ this figure depicts a three-dimensional visualization of rotationally-invariant bi-cyclides for $t\in\{\pm 0.29K',\pm 0.38K',\pm 0.70K',\pm 0.90K'\}$ (respectively green, yellow, red, blue) and orthogonal bi-concave disk cyclides $s\in\{-0.6K,0,0.48K,0.66K,0.74K\}$} (respectively green, yellow, red, blue and dark blue). Note that biconcave disk at $s=0$ (rendered in yellow) corresponds to the unit sphere. \label{flatring3d2}}
\end{figure}

We now introduce harmonic functions $u(x,y,z)$ of the separated form \eqref{separation1}
which are harmonic in the union of all $D_1$ with $t_0\in(0,K')$. In particular, these functions must be harmonic
on the positive $z$-axis.
For $m\in\Z$, $n\in\N_0$, we define internal bi-cyclide harmonics of the first kind by
\begin{equation}\label{definternal1}
 \GG_{m,n}(x,y,z)=R^{-1/2}W_{|m|-\frac12}^n(s,k)W_{|m|-\frac12}^n(it-K-iK',k)\expe^{im\phi} .
 \end{equation}

\begin{thm}\label{internal1}
The internal bi-cyclide harmonic $\GG_{m,n}(x,y,z)$ is harmonic on all of $\R^3$ with the exception of the segment
$\{(0,0,z): -b^{-1}\le z\le -b\}$, where $b$ is given by \eqref{b}.
\end{thm}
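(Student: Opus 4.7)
The plan is to prove harmonicity in two stages. The first stage uses Theorem~\ref{separation} to get harmonicity of $\GG_{m,n}$ on $\R^3$ minus the $z$-axis. The second stage shows that $\GG_{m,n}$ extends real-analytically in Cartesian coordinates across each point of $\R^3\setminus\Gamma_2$, from which harmonicity there follows by continuity of the Laplacian.

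For the first stage, apply Theorem~\ref{separation} with $v_1(s)=W_{|m|-1/2}^n(s,k)$ and $v_2(t)=W_{|m|-1/2}^n(it-K-iK',k)$. Property (f) ensures $v_2$ is analytic on $t\in(-K',K')$ because its argument $-K+i(t-K')$ lies strictly inside the analyticity strip $|{\rm Im}\,s|<2K'$ off the branch cuts. The map $(s,t)\mapsto(R,z)$ takes the four sides of the rectangle $\gamma_1\cup\gamma_5$ ($s=K$), $\gamma_3$ ($s=-K$), $\gamma_4$ ($t=K'$), and $\gamma_2$ ($t=-K'$) to $\Gamma_1\cup\Gamma_5$, $\Gamma_3$, $\Gamma_4$, $\Gamma_2$, respectively; so we must extend across the first three (and the two good corners $z=b,b^{-1}$).

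For the second stage, consider a point of $\Gamma_3$ (the other cases are analogous) and write $\sigma:=K+s$. By properties (b), (c), $W_{|m|-1/2}^n(s,k)=(-1)^n\sigma^{|m|+1/2}g_1(\sigma^2)$ with $g_1$ analytic near $0$ and $g_1(0)\ne0$. From \eqref{RR} and $\cn(s,k)=k'\sigma+O(\sigma^3)$, we have $R=\alpha(t)\sigma\bigl(1+\tilde g(\sigma^2,t)\bigr)$ with $\alpha(t)=k'\cn(t,k')/(1+\dn(t,k'))>0$ and $\tilde g$ analytic with $\tilde g(0,t)=0$. Inverting this relation shows $\sigma^2$ is real-analytic in $(R^2,z)$, hence in $(x,y,z)$. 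Using the identity $R^{|m|}\expe^{\pm im\phi}=(x\pm iy)^{|m|}$ (sign matching that of $m$), the product $R^{-1/2}W_{|m|-1/2}^n(s,k)\expe^{im\phi}$ rewrites as $(x\pm iy)^{|m|}$ times $(-1)^n\alpha(t)^{-|m|-1/2}\bigl(1+\tilde g\bigr)^{|m|+1/2}g_1(\sigma^2)$, where the factor raised to the half-integer power $|m|+1/2$ is analytic and strictly positive near the axis and hence has a real-analytic $(|m|+1/2)$-th power. Multiplying by the bounded analytic factor $v_2(t)$, which via the analytic inverse $t=t(R^2,z)$ is real-analytic in $(x,y,z)$, we conclude $\GG_{m,n}$ extends real-analytically across $\Gamma_3$. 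Entirely parallel estimates using (b) near $s=K$, and using (b), (c), (f) near the branch point $-K$ reached as $t\to K'$ via $K+(it-K-iK')=i(t-K')$, handle $\Gamma_1$, $\Gamma_5$, $\Gamma_4$; the two good corners $z=b,b^{-1}$ require a joint expansion in $K\mp s$ and $K'-t$, with the product of the corresponding vanishing factors again absorbing $R^{-1/2-|m|}$.

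The extension is harmonic by continuity of $\Delta$ on $\R^3\setminus\Gamma_2$. The main obstacle is verifying the real-analyticity of $\sigma^2$ (and the corresponding quantities near the other good sides) as functions of $(x,y,z)$, which rests on the analyticity of the bi-cyclide coordinate inverse stated in the paper by analogy with the flat-ring case. Extension fails across $\Gamma_2$ because as $t\to -K'$ the argument $it-K-iK'$ approaches $-K-2iK'$, which sits on the boundary of the analyticity strip at a regular singular point of \eqref{odeW} at which $W_{|m|-1/2}^n$ is generically dominant rather than recessive; no cancellation of $R^{-1/2}$ occurs there and the function cannot extend harmonically across $\Gamma_2$ in general.
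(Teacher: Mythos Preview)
Your strategy differs from the paper's. The paper does not attempt an explicit real-analytic extension; instead it observes from \eqref{propb} and \eqref{evenodd} that $R^{-1/2}v_1(s)v_2(t)$ is locally \emph{bounded} at every boundary point of the rectangle except on the closed segment $\gamma_2$ (and at $(K,0)$, which corresponds to infinity), and then appeals to a removable-singularity lemma (the lemma stated immediately after the theorem), proved with the harmonic barrier $-\ln R$ and the maximum principle. Your route is more constructive and exhibits the precise local form of $\GG_{m,n}$ near the axis, while the paper's boundedness argument is shorter and handles sides and corners uniformly with no coordinate inversion needed.

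There is, however, a genuine gap in your corner step. On the open sides your implicit-function argument is sound: the Jacobian of $(\sigma^2,t)\mapsto(R^2,z)$ at $\sigma^2=0$ is triangular with nonzero diagonal entries $\alpha(t)^2$ and $\partial z/\partial t\big|_{s=-K}=k'\cn(t,k')/(1+\dn(t,k'))=\alpha(t)>0$. At the corners $(s,t)=(\mp K,K')$, by contrast, the planar map $(s,t)\mapsto(R,z)$ has a critical point: writing $F(w)=i(\ssc(w,k)+\nc(w,k))$ with $w=s-it$ one gets $F'=i\,\dc\,(\nc+\ssc)$, and $\dn(K+iK',k)=0$ forces $\dc(\mp K-iK',k)=0$. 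Hence locally $z+iR-b\sim C(\sigma+i\tau)^2$, and neither $\sigma^2$ nor $\tau^2$ is a real-analytic function of $(R^2,z)$ there; the ``joint expansion in $K\mp s$ and $K'-t$'' you sketch does not by itself deliver analyticity in Cartesian coordinates. The repair is easy: your side analysis already gives harmonicity on $\R^3\setminus\bigl(\Gamma_2\cup\{(0,0,b),(0,0,b^{-1})\}\bigr)$, and near the two remaining \emph{isolated} points $\GG_{m,n}$ is bounded (since $|R^{-1/2}v_1v_2|\lesssim(\sigma\tau)^{|m|}$), so the classical removable-singularity theorem for bounded harmonic functions at a point finishes the proof. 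Alternatively, invoking boundedness together with the paper's lemma from the start makes the power-series bookkeeping unnecessary.
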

\begin{proof}
Using \eqref{odeW} we see that $v_1(s)=W_{|m|-\frac12}(s,k)$,
 $v_2(t)=W_{|m|-\frac12}^n(it-K-iK',k)$ satisfy \eqref{lame1}, \eqref{lame2} with $\lambda=\Lambda_{|m|-\frac12}^n(k)$
in both equations.
It follows from Theorem~\ref{separation} that $\GG_{m,n}(x,y,z)$ is harmonic on all of $\R^3$ minus the $z$-axis.
Using \eqref{RR}, \eqref{propb} and \eqref{evenodd}, we see that
the function
\[(s,t)\mapsto R^{-1/2}W_{|m|-\frac12}^n(s,k)W_{|m|-\frac12}^n(it-K-iK',k)\]
is locally bounded at every point
on the boundary of the rectangle $(s,t)\in(-K,K)\times(-K',K')$ with the exception of the closed segment $\gamma_2$ (defined in Figure \ref{fig1}) and the point $(K,0)$.
Since the map $(R,z)\mapsto (s,t)$ is continuous, we obtain that $\GG_{m,n}(x,y,z)$ is locally bounded
at every point of the $z$-axis with the exception of the closed segment $\Gamma_2$ (defined in Figure \ref{fig2}).
Note that we cannot claim that
$\GG_{m,n}(x,y,z)$ is locally bounded at the points of the closed segment $\Gamma_2$
because the function $W^n_\nu$ is a solution of \eqref{odeW} which belongs to the exponent $\nu+1$ at the regular singular points $-K$ and $K$ but possibly not at $K-2iK'$ (actually, it cannot belong to the exponent $\nu+1$ there.)
The local boundedness of $\GG_{m,n}$ at a point on the $z$-axis implies that the function $\GG_{m,n}$ can be continued to
an harmonic function in a neighborhood of this point according to the following lemma. This completes the proof.
\end{proof}

\begin{lemma}
Consider the ball
\[ B_r=\{(x,y,z): x^2+y^2+z^2<r^2\}, \]
and a bounded continuous function
\[u:B_r^\ast:=\overline B_r\setminus\{(0,0,z): z\in\R\}\to\R\]
such that $u$ is harmonic on
$B_r\setminus\{(0,0,z): z\in\R\}$.
Then $u$ has a harmonic extension to $B_r$.
\end{lemma}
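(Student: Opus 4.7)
The plan is to construct a candidate harmonic extension $v$ on $B_r$ by Poisson integration of the boundary values of $u$, and then to identify $v$ with $u$ on $B_r^\ast$ by a barrier argument built from $R\mapsto\log R$, which is harmonic on $\R^3\setminus\{(0,0,z):z\in\R\}$ and blows up on the axis.

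Start by noting that $u|_{\partial B_r}$ is bounded and continuous except at the two poles $(0,0,\pm r)$. Extend it arbitrarily at those two points to a bounded measurable function on $\partial B_r$ and let $v$ be its Poisson integral. Then $v$ is harmonic on $B_r$, is bounded by $\sup_{B_r^\ast}|u|$, and satisfies $v(x)\to u(\zeta)$ whenever $x\to\zeta\in\partial B_r\setminus\{(0,0,\pm r)\}$. Set $w:=u-v$. This $w$ is bounded, harmonic on $B_r\setminus\{(0,0,z):z\in\R\}$, and extends continuously by $0$ to $\partial B_r\setminus\{(0,0,\pm r)\}$; the lemma reduces to showing $w\equiv 0$ there, after which $v$ is the desired harmonic extension.

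For $\epsilon>0$ introduce the barrier
\[
 h_\epsilon(x,y,z):=\epsilon\log\frac{r}{R},\qquad R=(x^2+y^2)^{1/2}.
\]
Because $\log R$ is annihilated by $\partial_R^2+R^{-1}\partial_R$ and has no $z$-dependence, $h_\epsilon$ is harmonic on $\R^3\setminus\{(0,0,z):z\in\R\}$; inside $B_r$ it satisfies $h_\epsilon\ge 0$, $h_\epsilon\to 0$ as $R\to r^-$, and $h_\epsilon\to+\infty$ as $R\to 0^+$. Set $M:=\sup|w|<\infty$ and, for each small $\epsilon>0$, put $\delta=\delta(\epsilon):=r\expe^{-M/\epsilon}$. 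On the bounded open set $U_\delta:=B_r\cap\{R>\delta\}$ (whose closure avoids the axis), $w-h_\epsilon$ is harmonic and continuous up to $\partial U_\delta$. On the spherical piece $\partial B_r\cap\{R\ge\delta\}$ one has $w=0$ and $h_\epsilon\ge 0$, so $w-h_\epsilon\le 0$; on the cylindrical piece $\overline{B_r}\cap\{R=\delta\}$ one has $|w|\le M=\epsilon\log(r/\delta)=h_\epsilon$, so again $w-h_\epsilon\le 0$. The classical maximum principle then gives $w\le h_\epsilon$ on $U_\delta$, and the analogous argument applied to $-w$ yields $|w|\le h_\epsilon$ on $U_\delta$. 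Fixing $x_0\in B_r\setminus\{(0,0,z):z\in\R\}$ and letting $\epsilon\to 0^+$ (so $\delta(\epsilon)\to 0$ and eventually $x_0\in U_\delta$) gives $|w(x_0)|\le\epsilon\log(r/R(x_0))\to 0$, hence $w(x_0)=0$.

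The main obstacle is the lack of boundary continuity of $u$ at the two poles, which leaves $v$ potentially far from $u$ near them. The barrier $h_\epsilon$ dispatches this neatly: since $R\to 0$ along every sequence approaching a pole as well as along every sequence approaching the axis, a single logarithmic barrier simultaneously dominates $w$ near the axis and absorbs the contribution of the poles, making the maximum principle on $U_\delta$ genuinely available. Everything else reduces to standard facts: Poisson-integral solvability of the Dirichlet problem with bounded measurable data, harmonicity of $\log R$ off the axis, and the classical maximum principle for harmonic functions on bounded domains with continuous boundary data.
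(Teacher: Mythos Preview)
Your proof is correct and follows essentially the same route as the paper: solve the Dirichlet problem on $B_r$ via the Poisson integral, subtract to form a bounded harmonic difference that vanishes on the sphere away from the poles, and squeeze it to zero using the logarithmic barrier $\epsilon\log(r/R)$ together with the maximum principle on $B_r\cap\{R>\delta\}$. The only cosmetic differences are that you fix $\delta=r\expe^{-M/\epsilon}$ explicitly rather than invoking its existence, and you bound $w=u-v$ directly rather than bounding $u$ and $v$ separately.
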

\begin{proof}
Using the Poisson integral \cite[p.~241]{Kellogg}
we solve the Dirichlet problem on $B_r$ with boundary values $u$. We obtain a solution $U$ which is harmonic on $B_r$,
continuous on $\overline B_r\setminus \{(0,0,r),(0,0,-r)\}$ and agrees with $u$ on $\partial B_r\setminus \{(0,0,r),(0,0,-r)\}$
\cite[p.~243, Remark]{Kellogg}.
Define a function $v$ by
\[ v(x,y,z)=-\ln\frac{\sqrt{x^2+y^2}}{r} .\]
This function is harmonic on $\R^3$ minus the $z$-axis. Let $\epsilon>0$.
Consider the function
\[ w=\epsilon v-(u-U) .\]
Then $w\ge 0$ on $\partial B_r\setminus \{(0,0,r),(0,0,-r)\}$.
There is a constant $M$ such that $|u|\le M$ on $B_r^\ast$. Then also $|U|\le M$ on $B_r^\ast$, so $|u-U|\le 2M$ on $B_r^\ast$.
Choose $\delta>0$ so small that $\epsilon v\ge 2M$ if $x^2+y^2\le \delta^2$.
Then $w\ge 0$ on the boundary of the set $A:=B_r\setminus \{(x,y,z): x^2+y^2\le \delta^2\}$.
By the maximum principle for harmonic functions, $w\ge 0$ on $\overline A$.
We can choose $\delta>0$
as small as we want, so $w\ge 0$ on $B_r^\ast$. Since $\epsilon>0$ is arbitrary, we get $U-u\ge 0$ on $B_r^\ast$.
In a similar way, we get $U-u\le0$ on $B_r^\ast$.
Therefore, $u=U$ on $B_r^\ast$, so $U$ is the desired extension of $u$.
\end{proof}

Let
\[ \sigma(\r)=\|\r\|^{-2}\r \]
denote the inversion at the unit sphere in $\R^3$.
Then the corresponding Kelvin transform of a harmonic function $u(\r)$ is
\begin{equation}\label{Kelvin}
  \hat u(\r)=\|\r\|^{-1}u(\sigma(\r))
\end{equation}
and this function is also harmonic.
The inversion at the unit sphere is expressed by $s\mapsto -s$ in bi-cyclide coordinates.
It follows from \eqref{evenodd} and
\[ x^2+y^2+z^2=\frac{1+\sn(s,k)\dn(t,k')}{1-\sn(s,k)\dn(t,k')} \]
that the Kelvin transformation \eqref{Kelvin} of $\GG_{m,n}$ satisfies
\[ \widehat{\GG}_{m,n}(\r)=(-1)^n  \GG_{m,n}(\r) .\]

For $m\in\Z$, $n\in\N_0$, we define external bi-cyclide harmonics of the first kind by
\begin{equation}\label{defexternal1}
 \HH_{m,n}(x,y,z)=R^{-1/2}W_{|m|-\frac12}^n(s,k)W_{|m|-\frac12}^n(-it-K-iK',k)\expe^{im\phi} .
\end{equation}
The definition of $\HH_{m,n}$ is the same as that of $\GG_{m,n}$ except that we replaced $t$ by $-t$.
Therefore,
\begin{equation}\label{intext1}
 \HH_{m,n}(x,y,z)=\GG_{m,n}(x,y,-z).
\end{equation}
By Theorem \ref{internal1}, $\HH(x,y,z)$ is harmonic on all of $\R^3$ except the segment $\{(0,0,z): b\le z\le b^{-1}\}$.
Note that the notions ``internal'' and ``external'' refer to the surfaces $t=t_0$ with $t_0\in(0,K')$.

\section{Applications of bi-cyclide harmonics of the first kind}\label{A1}

We solve the Dirichlet problem for the region $D_1$ given by  $t\in(t_0,K']$, where $t_0\in(0,K')$.
We say that a harmonic function $u$ defined in $D_1$ attains
the boundary values $f$ on $\partial D_1$
in the weak sense if $R^{1/2}\,u$
(expressed in terms of bi-cyclide coordinates $s,t,\phi$) evaluated at $t_1\in(t_0,K')$ converges to $R^{1/2}f$
in the Hilbert space
\[ H_1=L^2((-K,K)\times (-\pi,\pi)) \]
as $t_1\to t_0$.
As in \cite[\S~5.2]{BiCohlVolkmerA2},
one can show that the solution of the Dirichlet problem is unique.

\begin{thm}\label{Dirichlet1}
Let $f$ be a function defined on the boundary $\partial D_1$ of the region $D_1$ given by $t\in(t_0,K']$ for some $t_0\in(0,K')$.
Suppose that $f$ is represented in bi-cyclide coordinates as
\[ f(\r)=R^{-1/2}g(s,\phi) ,\quad  s\in(-K,K),\quad \phi\in(-\pi,\pi],\]
such that $g\in H_1$.
For all $m\in\Z$ and $n\in\N_0$ define
\begin{eqnarray*}
&&\hspace{-2.5cm} c_{m,n}:=\frac{1}{2\pi}\int_{-\pi}^\pi
 \expe^{-im\phi}
 \int_{-K}^K g(s,\phi)W^{n}_{|m|-\frac12}(s,k)\,\dd s\,\dd\phi\\
 &&\hspace{-1.5cm}=  \frac{1}{2\pi W^{n}_{|m|-\frac12}(it_0-K-iK',k)}\int_{\partial D_1} \frac{1}{h_s(\r)} f(\r) \GG_{-m,n}(\r)\,\dd S(\r),
\end{eqnarray*}
where $h_s$ is given in \eqref{metric}.
Then the function
\begin{equation}\label{sol1}
 u(\r)=\sum_{m\in\Z}\sum_{n=0}^\infty d_{m,n}  G_{m,n}(\r),\quad d_{m,n}:=c_{m,n}\{W_{|m|-\frac12}^n(it_0-K-iK',k)\}^{-1},
\end{equation}
is harmonic in $D_1$ and it attains the boundary values $f$ on $\partial D_1$ in the weak sense.
The infinite series in \eqref{sol1} converges absolutely and uniformly in compact subsets of $D_1$.
\end{thm}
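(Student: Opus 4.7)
The plan is to carry out four linked verifications, closely following the strategy used for flat-ring coordinates in \cite{BiCohlVolkmerA2}. \emph{First,} I reconcile the two displayed formulas for $c_{m,n}$. On the surface $\partial D_1=\{t=t_0\}$ the element of area is $\dd S = h_s h_\phi\,\dd s\,\dd\phi = R h_s\,\dd s\,\dd\phi$ by \eqref{metric}. Substituting $f(\r)=R^{-1/2}g(s,\phi)$ and the definition \eqref{definternal1} of $\GG_{-m,n}$ (using $|{-m}|=|m|$), the combined factor $h_s^{-1}\cdot R^{-1/2}\cdot R^{-1/2}\cdot R h_s$ collapses to $1$, so that pulling the constant $W^n_{|m|-\frac12}(it_0-K-iK',k)$ outside the integral and dividing by $2\pi$ times that same constant (which is nonzero by Lemma~\ref{estimate1}(i) or \ref{estimate2}(i)) yields the first expression.

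\emph{Second,} by property (d) of Section \ref{LW} combined with the Fourier basis on $(-\pi,\pi)$, the family $\bigl\{(2\pi)^{-1/2} W^n_{|m|-\frac12}(s,k)\expe^{im\phi}\bigr\}_{m\in\Z,\,n\in\N_0}$ is an orthonormal basis of $H_1$, and hence $\{c_{m,n}/\sqrt{2\pi}\}$ are the Fourier coefficients of $g\in H_1$. Parseval's identity gives $\sum_{m,n}|c_{m,n}|^2 = 2\pi\|g\|_{H_1}^2 < \infty$.

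\emph{Third,} for convergence I write the generic summand in \eqref{sol1} as
\[ c_{m,n}\, R^{-1/2}\, W^n_{|m|-\frac12}(s,k)\, r_{m,n}(t)\, \expe^{im\phi}, \qquad r_{m,n}(t) := \frac{W^n_{|m|-\frac12}(it-K-iK',k)}{W^n_{|m|-\frac12}(it_0-K-iK',k)} , \]
and use property (c) to rewrite $r_{m,n}(t)$ as $W^n_{|m|-\frac12}(K+i(K'-t),k)/W^n_{|m|-\frac12}(K+i(K'-t_0),k)$. On any compact subset of $D_1$ the variable $t$ stays in some interval $[t_1,t_2]\subset(t_0,K')$, so $K'-t$ and $K'-t_0$ both lie in $(0,K')$ with $K'-t<K'-t_0$. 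Lemma \ref{estimate1} (when $|m|\ge 1$) and Lemma \ref{estimate2} (when $m=0$ and $n\ge N$, with the finitely many smaller $n$ absorbed into the constant) then provide the bound
\[ 0 < r_{m,n}(t)\le 2\expe^{-c(n+|m|+\frac12)(t-t_0)} \]
with a uniform $c>0$. Together with the $\ell^2$-summability of $\{c_{m,n}\}$ and an at-most polynomial-in-$(n+|m|)$ bound for $R^{-1/2} W^n_{|m|-\frac12}(s,k)$ on the compact set (where \eqref{propb} is used near $s=\pm K$ to absorb the $R^{-1/2}$ singularity), the Cauchy--Schwarz inequality yields absolute and uniform convergence of \eqref{sol1} on every compact subset of $D_1$.

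\emph{Fourth,} since $D_1\subset\{z>0\}$ is disjoint from the singular segment $\{(0,0,z):-b^{-1}\le z\le -b\}$ of Theorem~\ref{internal1}, each summand is harmonic on $D_1$, and harmonicity passes to $u$ under uniform convergence on compact subsets. For the weak boundary values I evaluate on $t=t_1\in(t_0,K')$ and apply Parseval:
\[ \bigl\| R^{1/2} u\big|_{t=t_1} - g\bigr\|_{H_1}^2 = 2\pi\sum_{m,n} |c_{m,n}|^2\, |r_{m,n}(t_1)-1|^2 . \]
The monotonicity of $w_n$ established inside the proofs of Lemmas \ref{estimate1} and \ref{estimate2} yields $0<r_{m,n}(t_1)\le 1$ and $r_{m,n}(t_1)\to 1$ as $t_1\to t_0^+$, so dominated convergence in $\ell^2$ drives the right-hand side to $0$. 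The main obstacle is obtaining the uniform-in-$(m,n)$ pointwise bound on $R^{-1/2} W^n_{|m|-\frac12}(s,k)$, especially near the $z$-axis where the coordinates degenerate; this is the standard technical nuisance with Lam\'e--Wangerin expansions, and Lemmas \ref{estimate1} and \ref{estimate2} were formulated precisely to make these bounds tractable.
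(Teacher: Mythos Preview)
Your overall strategy---reconciling the two integrals via $\dd S = R\,h_s\,\dd s\,\dd\phi$, invoking completeness of $\{W^n_{|m|-\frac12}(s,k)\expe^{im\phi}\}$ in $H_1$, bounding the ratio $r_{m,n}(t)$ exponentially through Lemmas~\ref{estimate1} and~\ref{estimate2}, and verifying weak boundary values by Parseval plus dominated convergence---matches the paper's. There is, however, one genuine gap and one under-justified step.

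The gap is your claim that on any compact subset of $D_1$ the coordinate $t$ stays in an interval $[t_1,t_2]\subset(t_0,K')$. This is false: $D_1$ contains an open segment of the $z$-axis, and the points on $\Gamma_4=\{(0,0,z):b<z<b^{-1}\}$ correspond in the extended coordinates to $t=K'$ with $s\in(-K,K)$. At such points $R=0$ while $W^n_{|m|-\frac12}(s,k)\ne 0$ in general, so your factor $R^{-1/2}W^n_{|m|-\frac12}(s,k)$ blows up and the product bound you propose does not hold there. The paper handles this with the maximum principle: since each $\GG_{m,n}$ is harmonic on the compact set $\{t\ge t_1\}$ by Theorem~\ref{internal1}, one has $\sup_{t\ge t_1}|d_{m,n}\GG_{m,n}|=\sup_{t=t_1}|d_{m,n}\GG_{m,n}|$, and it suffices to bound the summand on the single slice $t=t_1$. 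The under-justified step is the asserted polynomial-in-$(|m|+n)$ bound for $R^{-1/2}W^n_{|m|-\frac12}(s,k)$ uniformly in $s$; the expansion \eqref{propb} alone does not deliver this because the leading coefficient depends on $m,n$. The paper first estimates $R^{-1}\le (2/k')\dc(s,k)\nc(t_1,k')$ from \eqref{RR} and then invokes \cite[Lemmas~2.4, 2.5]{BiCohlVolkmerB} to bound $\dc^{1/2}(s,k)W^n_{|m|-\frac12}(s,k)$ uniformly, obtaining $M_{m,n}=C_1C_2\,p^{|m|+n}(1+|m|+n)$ with $p\in(0,1)$.
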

\begin{proof}
Since $\dd S(\r)=R h_s(\r)\,\dd s\,\dd\phi$, the two formulas for $c_{m,n}$ agree.
The system of functions $W^n_{|m|-\frac12}(s,k)\expe^{im\phi}$, $m\in\Z$, $n\in\N_0$, is orthogonal and complete in
the Hilbert space $H_1$ so we have the corresponding Fourier expansion
\[ g(s,\phi)\sim \sum_{m\in\Z}\sum_{n=0} c_{m,n}W_{|m|-\frac12}^n(s,\phi)\expe^{-im\phi}.\]
In particular, the sequence $\{c_{m,n}\}$ is bounded: $|c_{m,n}|\le C_1$.
We use the Weierstrass $M$-test to show uniform convergence of the series in \eqref{sol1} on the compact set $t\ge t_1>t_0$.
Using the maximum principle for harmonic functions it is sufficient to find bounds $M_{m,n}$
such that $|d_{m,n}G_{m,n}(\r)|\le M_{m,n}$ for $t=t_1$ and $\sum_{m\in\Z}\sum_{n\in\N_0} M_{m,n}<\infty$.
Using \eqref{RR} we find for $s\in(-K,K)$ and $t=t_1$,
\[ \frac1R=\frac{1-\sn(s,k)\dn(t_1,k')}{\cn(s,k)\cn(t_1,k')}\le \frac{2}{\cn(s,k)\cn(t_1,k')}\le\frac{2}{k'}\dc(s,k)\nc(t_1,k') .\]
Using \cite[Lemmas 2.4, 2.5]{BiCohlVolkmerB} and Lemmas \ref{estimate1}, \ref{estimate2},
we estimate
\[
\left|\frac{W_{|m|-\frac12}^n(it_1-K-iK',k)}{W_{|m|-\frac12}^n(it_0-K-iK',k)}
R^{-1/2}W_{|m|-\frac12}^n(s,k)\expe^{im\phi}\right|\le
C_2 p^{|m|+n} (1+|m|+n) ,\]
where the constants $C_2$ and $p\in(0,1)$ are independent of $m,n,s,\phi$.
Therefore, we can take $M_{m,n}=C_1C_2 p^{|m|+n} (1+|m|+n)$ and the proof of convergence is complete.
Hence $u(\r)$ defined by \eqref{sol1} is a harmonic function on $D_1$.
We show that  $u$ attains the boundary values $f$ on $\partial D_1$ in the weak sense
by the same method as used in the proof of \cite[Theorem 5.3]{BiCohlVolkmerA2}.
\end{proof}

Define the Wronskian $w_{m,n}$ by
\begin{equation}\label{wronskian}
w_{m,n}:=U(t)V'(t)-U'(t)V(t),
\end{equation}
where $U(t):=W^n_{|m|-\frac12}(it-K-iK',k)$, $V(t):=U(-t)$.
External harmonics admit an integral representation in terms of internal harmonics.

\begin{thm}\label{intrep1}
Let $t_0\in(0,K')$, $m\in\Z$, $n\in\N_0$, and let $\r^\ast$ be  a point outside $\overline{D}_1$, where $D_1$ is the region given by $t\in(t_0,K']$. Then
\begin{equation}\label{intrep1a}
 \HH_{m,n}(\r^\ast)=\frac{w_{m,n}}{4\pi\{W_{|m|-\frac12}^n(it_0-K-iK',k)\}^2}\int_{\partial D_1} \frac{\GG_{m,n}(\r)}{h_s(\r)\|\r-\r^\ast\|}{\mathrm d}S(\r).
\end{equation}
\end{thm}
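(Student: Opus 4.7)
The plan is to apply Green's third identity to $\HH_{m,n}$ in the exterior domain $\Omega:=\R^3\setminus\overline{D_1}$ and then convert the resulting boundary integral into the stated single-layer integral of $\GG_{m,n}$ by exploiting two identities that hold on $\partial D_1$ between $\GG_{m,n}$, $\HH_{m,n}$, and their normal derivatives. The ingredients will be Kelvin-invariance of $\HH_{m,n}$ (to justify Green's identity on an unbounded exterior), a Wronskian computation that uses the common $(s,\phi)$-factor of $\GG_{m,n}$ and $\HH_{m,n}$ on $\partial D_1$, and Green's second identity applied to $\GG_{m,n}$ and $\|\cdot-\r^\ast\|^{-1}$ inside $D_1$ to eliminate a ``Dirichlet--Neumann'' leftover.

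First I verify the hypotheses of Green's third identity on $\Omega$. By \eqref{intext1} and Theorem \ref{internal1}, $\HH_{m,n}$ is harmonic on $\R^3$ except on the segment $\{(0,0,z):b\le z\le b^{-1}\}\subset\overline{D_1}$, so it is harmonic on $\Omega$. Repeating the Kelvin-transform computation carried out for $\GG_{m,n}$ just before \eqref{defexternal1} gives $\widehat{\HH}_{m,n}=(-1)^n\HH_{m,n}$, which together with boundedness of $\HH_{m,n}$ near the origin forces $\HH_{m,n}(\r)=O(\|\r\|^{-1})$ and $|\nabla\HH_{m,n}(\r)|=O(\|\r\|^{-2})$ as $\|\r\|\to\infty$. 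Green's third identity on $\Omega$ then yields, with $\partial_n=h_s^{-1}\partial_t$ the unit normal pointing out of $\Omega$ (i.e.\ into $D_1$),
\begin{equation*}
\HH_{m,n}(\r^\ast)=\frac{1}{4\pi}\int_{\partial D_1}\!\left[\frac{\partial_n\HH_{m,n}(\r)}{\|\r-\r^\ast\|}-\HH_{m,n}(\r)\,\partial_n\frac{1}{\|\r-\r^\ast\|}\right]\dd S(\r).
\end{equation*}

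Next, on $\partial D_1=\{t=t_0\}$, set $U(t):=W^n_{|m|-\frac12}(it-K-iK',k)$, $V(t):=U(-t)$, and $W(s):=W^n_{|m|-\frac12}(s,k)$; then \eqref{definternal1} and \eqref{defexternal1} give
\[ \GG_{m,n}|_{t_0}=R^{-1/2}W(s)U(t_0)\expe^{im\phi},\qquad \HH_{m,n}|_{t_0}=R^{-1/2}W(s)V(t_0)\expe^{im\phi},\]
so $U(t_0)\HH_{m,n}=V(t_0)\GG_{m,n}$ pointwise on $\partial D_1$. A short $\partial_t$ computation shows that the $(\partial_t R^{-1/2})$ contributions cancel in the combination $U(t_0)\partial_n\HH_{m,n}-V(t_0)\partial_n\GG_{m,n}$, leaving only the constant Wronskian $w_{m,n}=UV'-U'V$ of two solutions of \eqref{lame2}:
\begin{equation*}
U(t_0)\,\partial_n\HH_{m,n}-V(t_0)\,\partial_n\GG_{m,n}=\frac{w_{m,n}}{h_s\,U(t_0)}\,\GG_{m,n}\quad\text{on }\partial D_1.
\end{equation*}

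Substituting these two relations into Green's third identity splits $\HH_{m,n}(\r^\ast)$ as
\begin{equation*}
\frac{V(t_0)}{4\pi U(t_0)}\!\int_{\partial D_1}\!\!\left[\frac{\partial_n\GG_{m,n}}{\|\r-\r^\ast\|}-\GG_{m,n}\,\partial_n\frac{1}{\|\r-\r^\ast\|}\right]\dd S+\frac{w_{m,n}}{4\pi U(t_0)^2}\!\int_{\partial D_1}\!\!\frac{\GG_{m,n}(\r)\,\dd S(\r)}{h_s(\r)\|\r-\r^\ast\|}.
\end{equation*}
Since $\r^\ast\notin\overline{D_1}$, the function $\|\cdot-\r^\ast\|^{-1}$ is harmonic in $D_1$, and $\GG_{m,n}$ is also harmonic in $D_1$ by Theorem \ref{internal1}. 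Green's second identity applied to this pair forces the first integral to vanish, and substituting $U(t_0)=W^n_{|m|-\frac12}(it_0-K-iK',k)$ reproduces the claimed formula \eqref{intrep1a}. The only delicate step is the decay estimate needed to apply Green's third identity on the unbounded exterior, which the Kelvin argument above handles cleanly.
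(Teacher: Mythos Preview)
Your proof is correct and follows the standard Green's-identity route that the paper has in mind when it omits the argument and points to \cite[Theorem~5.5]{BiCohlVolkmerA2}: represent $\HH_{m,n}$ by Green's third identity on the exterior of $\overline{D_1}$, convert the Dirichlet and Neumann data to those of $\GG_{m,n}$ via the proportionality on $\{t=t_0\}$ and the Wronskian identity for $U,V$, and then annihilate the leftover double-layer/Neumann combination by Green's second identity inside $D_1$. The Kelvin symmetry $\widehat{\HH}_{m,n}=(-1)^n\HH_{m,n}$ is exactly the right device to get the $O(\|\r\|^{-1})$ decay that makes the exterior representation valid, and your observation that the singular segment $\{(0,0,z):b\le z\le b^{-1}\}$ of $\HH_{m,n}$ lies in $\overline{D_1}$ (indeed strictly inside for $t_0<K'$) is what guarantees smoothness up to $\partial D_1$. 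One small point you could make explicit is that $U(t_0)=W^n_{|m|-\frac12}(it_0-K-iK',k)\ne 0$, which follows from Lemma~\ref{estimate1}(i) (or Lemma~\ref{estimate2}(i) when $m=0$) together with \eqref{evenodd}; this is needed to divide by $U(t_0)$ in the Wronskian step.
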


We omit the proof of this theorem which is very similar to the proof of
\cite[Theorem 5.5]{BiCohlVolkmerA2}. It follows from \eqref{intrep1a} that $w_{m,n}\ne 0$.

We obtain the expansion of the reciprocal distance of two points in internal and external bi-cyclide harmonics
by combining Theorems  \ref{Dirichlet1} and \ref{intrep1}.

\begin{thm}\label{expansion1}
Let $\r,\r^\ast\in \R^3$ have bi-cyclide coordinates $(s,t,\phi)$ and $(s^\ast,t^\ast,\phi^\ast)$,  respectively.
If $-K'<t^\ast<t<K'$ then
\begin{equation}\label{expansion1a}
\frac{1}{\|\r-\r^\ast\|}=2\sum_{m\in\Z}\sum_{n=0}^\infty
\frac{1}{w_{m,n}} \GG_{m,n}(\r)\HH_{-m,n}(\r^\ast).
\end{equation}
\end{thm}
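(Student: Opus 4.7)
The strategy is to apply Theorem~\ref{Dirichlet1} and Theorem~\ref{intrep1} to the Newton kernel $f(\r) := \|\r - \r^\ast\|^{-1}$, with $\r^\ast$ held fixed. Given $\r,\r^\ast$ with $-K' < t^\ast < t < K'$, I would introduce an auxiliary parameter $t_0 \in (t^\ast, t)$ and set $D_1 = \{\r' : t' \in (t_0, K']\}$. Then $\r \in D_1$ while $\r^\ast$ lies outside $\overline{D_1}$, so $f$ is harmonic in an open neighborhood of $\overline{D_1}$ and, in particular, continuous up to the boundary.

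Because the denominator $1-\sn(s,k)\dn(t_0,k')$ is bounded away from zero for fixed $t_0 \in (0,K')$, the factor $R$ on the coordinate surface $t = t_0$ is bounded, so $g(s,\phi) := R^{1/2} f|_{t=t_0}$ lies in $H_1$. Moreover, $f$ attains these boundary values in the weak sense of Theorem~\ref{Dirichlet1}, so by the uniqueness of the Dirichlet problem cited there, $f$ must coincide with the series solution on $D_1$:
\[
\frac{1}{\|\r - \r^\ast\|} = \sum_{m\in\Z}\sum_{n=0}^\infty d_{m,n}\,\GG_{m,n}(\r), \qquad d_{m,n} = \frac{c_{m,n}}{W^n_{|m|-\frac12}(it_0-K-iK', k)},
\]
with $c_{m,n}$ given by the boundary integral in Theorem~\ref{Dirichlet1} applied to this $f$.

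To simplify $c_{m,n}$, I would invoke Theorem~\ref{intrep1} after replacing $m$ by $-m$. Since $W^n_{|m|-\frac12}$ depends only on $|m|$, we have $w_{-m,n} = w_{m,n}$, and the boundary integral appearing in $c_{m,n}$ is recognized as
\[
\int_{\partial D_1} \frac{\GG_{-m,n}(\r')}{h_s(\r')\,\|\r' - \r^\ast\|}\,dS(\r') = \frac{4\pi\{W^n_{|m|-\frac12}(it_0-K-iK',k)\}^2}{w_{m,n}}\,\HH_{-m,n}(\r^\ast).
\]
Substituting this into $c_{m,n}$ and thence into $d_{m,n}$, the three factors of $W^n_{|m|-\frac12}(it_0-K-iK',k)$ cancel exactly, leaving $d_{m,n} = 2\HH_{-m,n}(\r^\ast)/w_{m,n}$, which is \eqref{expansion1a}. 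Absolute and uniform convergence on compact subsets of $\{t > t^\ast\}$ is inherited directly from Theorem~\ref{Dirichlet1}.

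The main obstacle is bookkeeping: threading the three occurrences of the normalization $W^n_{|m|-\frac12}(it_0-K-iK',k)$ through the two cited theorems so that they cancel cleanly, and confirming the symmetry $w_{-m,n} = w_{m,n}$ needed to interpret the $-m$ coefficient in terms of the Wronskian indexed by $m$. Once these cancellations are performed, the right-hand side is manifestly independent of the auxiliary parameter $t_0 \in (t^\ast, t)$, confirming that the expansion is well-defined on the entire region $-K' < t^\ast < t < K'$.
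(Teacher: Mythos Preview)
Your approach is essentially the paper's: apply Theorem~\ref{Dirichlet1} to $f(\u)=\|\u-\r^\ast\|^{-1}$ on a region $D_1$ with $\r$ inside and $\r^\ast$ outside, and identify the coefficients via Theorem~\ref{intrep1}. The coefficient bookkeeping you outline is correct.

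There is, however, a genuine gap. Both Theorem~\ref{Dirichlet1} and Theorem~\ref{intrep1} are stated only for $t_0\in(0,K')$, because the region $D_1$ is defined (Section~\ref{H1}) as the bounded interior of the closed surface $t=t_0$ lying in the half-space $z>0$; for $t_0\le 0$ the set $\{t'>t_0\}$ is no longer of this type. Consequently you can only choose $t_0\in(t^\ast,t)\cap(0,K')$, which forces $t>0$. Your argument as written does not cover the case $-K'<t^\ast<t\le 0$. The paper closes this gap by a reflection: if $t\le 0$, replace $\r,\r^\ast$ by their mirror images in the plane $z=0$, which sends $(t,t^\ast)\mapsto(-t,-t^\ast)$, apply the already-proved case to the reflected pair in reversed order, and then use the relation $\HH_{m,n}(x,y,z)=\GG_{m,n}(x,y,-z)$ from \eqref{intext1} to recover \eqref{expansion1a} for the original points. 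You should add this reflection step (or otherwise justify the expansion for $t\le 0$).
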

\begin{proof}
If $t>0$ we choose $t_0\in(0,K')$ such that $t^\ast<t_0<t$, and consider the region $D_1$ interior to the surface $t=t_0$.
Then we apply Theorem \ref{Dirichlet1}
to the function $f(\u)=\|\u-\r^\ast\|^{-1}$ which is harmonic on an open set containing the closure of $D_1$ (because $\r^\ast$ lies outside the closure of $D_1$).
Using Theorem \ref{intrep1} to evaluate the Fourier coefficients, we obtain \eqref{expansion1a}.

If $t\le 0$, we replace $\r$ and $\r^\ast$ by their reflections at the plane $z=0$. Then $t,t^\ast$ are
replaced by $-t,-t^\ast$. Now we apply the result from the first part of the proof to the reflected points
(in reversed order) and obtain again \eqref{expansion1} observing \eqref{intext1}.
\end{proof}

As a corollary we obtain the following addition formula for Lam\'e-Wangerin functions.
\begin{thm}\label{addnthm1}
Let {$m\in\N_0$}, $s,s^\ast\in(-K,K)$, $-K'<t^\ast<t<K'$.
Then
\begin{eqnarray}
&&\hspace{-2.9cm}Q_{m-\frac12}(\chi)
=
2\pi
\sum_{n=0}^\infty
\frac{1}{w_{m,n}}
W^n_{{m}-\frac12}(s,k)
W^n_{{m}-\frac12}(it-K-iK',k)\\
&&\hspace{1.5cm}\times W^n_{{m}-\frac12}(s^\ast,k)
W^n_{{m}-\frac12}(-it^\ast-K-iK',k),\nonumber
\end{eqnarray}
where $\chi:((-K,K)\times(-K',K'))^2\times(0,1)\to(1,\infty)$ is given by
\begin{eqnarray}\label{chist}
 &&\hspace{-0.9cm}\chi(s,t,s^\ast,t^\ast,k)=\nc(s,k)\nc(t,k')\nc(s^\ast,k)\nc(t^\ast,k')\\
 &&\quad -\dc(s,k)\ssc(t,k')\dc(s^\ast,k)\ssc(t^\ast,k')
 -\ssc(s,k)\dc(t,k')\ssc(s^\ast,k)\dc(t^\ast,k'),\nonumber
\end{eqnarray}
and $w_{m,n}$ is the Wronskian \eqref{wronskian}.
\end{thm}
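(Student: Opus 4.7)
The plan is to derive the addition formula by matching Fourier coefficients in $\expe^{im(\phi-\phi^\ast)}$ of two distinct expansions of $1/\|\r-\r^\ast\|$: Heine's classical identity in cylindrical coordinates and the bi-cyclide expansion from Theorem~\ref{expansion1}. Heine's formula reads
\[
 \frac{1}{\|\r-\r^\ast\|} = \frac{1}{\pi\sqrt{RR^\ast}} \sum_{m\in\Z} Q_{|m|-\frac12}(\chi_0)\,\expe^{im(\phi-\phi^\ast)},\quad \chi_0 := \frac{R^2+(R^\ast)^2+(z-z^\ast)^2}{2RR^\ast},
\]
valid whenever $(R,z)\ne (R^\ast,z^\ast)$, which holds because $t^\ast<t$. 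First I would show, by substituting \eqref{RR} and \eqref{zz}, that $\chi_0=\chi(s,t,s^\ast,t^\ast,k)$ as defined in \eqref{chist}. Using the identity $R^2+z^2=(1+\sn(s,k)\dn(t,k'))/(1-\sn(s,k)\dn(t,k'))$ implied by \eqref{sums}, together with its starred analogue, the sum $R^2+(R^\ast)^2+(z-z^\ast)^2$ combines over the common denominator $(1-\sn(s,k)\dn(t,k'))(1-\sn(s^\ast,k)\dn(t^\ast,k'))$, and the same denominator appears in $2RR^\ast=2\cn(s,k)\cn(t,k')\cn(s^\ast,k)\cn(t^\ast,k')/[(1-\sn(s,k)\dn(t,k'))(1-\sn(s^\ast,k)\dn(t^\ast,k'))]$. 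The resulting quotient, rewritten in Glaisher's notation, reproduces exactly \eqref{chist}.

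Next, substituting \eqref{definternal1} and \eqref{defexternal1} into \eqref{expansion1a} factors out $1/\sqrt{RR^\ast}$ and $\expe^{im(\phi-\phi^\ast)}$, yielding
\[
 \frac{1}{\|\r-\r^\ast\|}=\frac{2}{\sqrt{RR^\ast}}\sum_{m\in\Z}\expe^{im(\phi-\phi^\ast)}\sum_{n=0}^\infty\frac{W^n_{|m|-\frac12}(s,k)W^n_{|m|-\frac12}(it-K-iK',k)W^n_{|m|-\frac12}(s^\ast,k)W^n_{|m|-\frac12}(-it^\ast-K-iK',k)}{w_{m,n}}.
\]
Because the convergence in Theorem~\ref{expansion1} is absolute and uniform on compacta, both sides are legitimate Fourier series in $\phi-\phi^\ast\in(-\pi,\pi]$. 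Matching the coefficient of $\expe^{im(\phi-\phi^\ast)}$ on both sides, multiplying through by $\pi\sqrt{RR^\ast}$, and specializing to $m\in\N_0$ (so that $|m|=m$) gives the claimed identity.

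The main obstacle is the algebraic identification of $\chi_0$ with \eqref{chist} in the first step. While conceptually routine, the simplification requires careful bookkeeping to bring the factors $(1-\sn(s,k)\dn(t,k'))^{\pm 1}$ and $(1-\sn(s^\ast,k)\dn(t^\ast,k'))^{\pm 1}$ onto a common denominator and to recognize the resulting numerator as the combination of $\nc$, $\dc$, and $\ssc$ expressions appearing in \eqref{chist}. No Jacobian elliptic function identities beyond those already encoded in \eqref{sums} are needed, so the entire proof reduces to invoking Heine's identity, performing this computation, and applying Fourier uniqueness.
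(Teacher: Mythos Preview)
Your proposal is correct and follows essentially the same route as the paper: compare the bi-cyclide expansion \eqref{expansion1a} with the azimuthal Fourier expansion of $1/\|\r-\r^\ast\|$ (Heine's identity), identify $\chi$ with the expression \eqref{chist} by direct computation using \eqref{RR}, \eqref{zz}, \eqref{sums}, and read off the $m$th Fourier coefficient. The paper's proof is terser but makes exactly the same moves.
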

\begin{proof}
This follows from comparison of \eqref{expansion1a} with the azimuthal
Fourier expansion \cite[(15)]{CT}
\[ \frac{1}{\|\r-\r^\ast\|}=\frac{1}{\pi\sqrt{RR^\ast}}\sum_{m=0}^\infty Q_{m-\frac12}(\chi)\expe^{im(\phi-\phi^\ast)},\]
where
\[
\chi=\frac{R^2+{R^\ast}^2+(z-z^\ast)^2}{2RR^\ast},
\]
with $R$, $R^\ast$, $z$, $z^\ast$ given
in terms of bi-cyclide coordinates $s,t$ and $s^\ast, t^\ast$
respectively. The identity \eqref{chist} can be verified by a direct computation.
\end{proof}

Theorem \ref{addnthm1} leads to an integral relation for Lam\'e-Wangerin functions.

\begin{thm}\label{intrel1}
Let $m,n\in\N_0$, $s^\ast\in(-K,K)$, $-K'<t^\ast<t<K'$.
Then
\begin{eqnarray*}
 &&\hspace{-1.0cm}w_{m,n}\int_{-K}^K Q_{m-\frac12}(\chi) W_{m-\frac12}^n(s,k)\,\dd s\\
 &&\hspace{1.0cm}=
2\pi
W^n_{{m}-\frac12}(it-K-iK',k) W^n_{{m}-\frac12}(s^\ast,k)
W^n_{{m}-\frac12}(-it^\ast-K-iK',k).
\end{eqnarray*}
\end{thm}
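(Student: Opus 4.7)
The plan is to view the addition formula of Theorem \ref{addnthm1} as the Fourier expansion of the function $s \mapsto Q_{m-\frac12}(\chi(s,t,s^\ast,t^\ast,k))$ in the orthonormal basis $\{W^n_{m-\frac12}(\cdot,k)\}_{n \in \N_0}$ of $L^2(-K,K)$ supplied by property (d) of Section \ref{LW}, and to read off the $n$-th coefficient via an inner product against $W^n_{m-\frac12}(s,k)$.

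Concretely, I would fix $m,n \in \N_0$, $s^\ast \in (-K,K)$, and $-K' < t^\ast < t < K'$, multiply both sides of the expansion in Theorem \ref{addnthm1} by $W^n_{m-\frac12}(s,k)$, and integrate over $s \in (-K,K)$. Granting term-by-term integration, orthonormality collapses the sum to a single term and leaves
\[\int_{-K}^K Q_{m-\frac12}(\chi) W^n_{m-\frac12}(s,k)\,\dd s = \frac{2\pi}{w_{m,n}} W^n_{m-\frac12}(it-K-iK',k)\, W^n_{m-\frac12}(s^\ast,k)\, W^n_{m-\frac12}(-it^\ast-K-iK',k),\]
which becomes the asserted identity upon multiplying through by $w_{m,n}$.

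The one genuine obstacle is justifying the interchange of sum and integral, since Theorem \ref{addnthm1} is stated as a pointwise identity. The cleanest route is to note that $s \mapsto Q_{m-\frac12}(\chi(s,t,s^\ast,t^\ast,k))$ is continuous on $(-K,K)$ and tends to $0$ at $s = \pm K$ (since $R \to 0$ there forces $\chi \to \infty$, and $Q_\nu(z) = O(z^{-\nu-1})$ at infinity), so it belongs to $L^2(-K,K)$. Its Fourier expansion in the orthonormal basis of property (d) then converges in $L^2$, so its coefficients are the integrals in question and are uniquely determined; Theorem \ref{addnthm1} identifies this expansion term by term and no further interchange argument is needed. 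As a backup, a Weierstrass $M$-test argument paralleling the one in the proof of Theorem \ref{Dirichlet1} --- using Lemmas \ref{estimate1} and \ref{estimate2} together with \cite[Lemmas 2.4, 2.5]{BiCohlVolkmerB} --- would bound the $n$-th term uniformly in $s\in[-K,K]$ by $C p^n(1+n)$ with $p\in(0,1)$, yielding absolute and uniform convergence and thereby legitimating the termwise integration via dominated convergence.
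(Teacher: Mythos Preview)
Your proposal is correct and matches the paper's approach: the paper does not write out a proof but simply states that the integral relation ``leads'' from the addition theorem of Theorem \ref{addnthm1}, which is exactly the orthogonality argument you describe. Your attention to justifying the term-by-term integration (via either $L^2$ Fourier uniqueness or the $M$-test bounds from the Dirichlet proof) goes beyond what the paper makes explicit; note that your first route still needs the series to converge in $L^2$ (pointwise convergence alone does not identify the coefficients), so the uniform estimate you sketch as backup is the ingredient that actually closes the argument.
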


Using the method employed in \cite{Volkmer84}, one can show that Theorem \ref{intrel1} remains true if we replace $m-\frac12$ everywhere by $\nu$. See \cite{BiCohlVolkmerB} for more details.

\section{Harmonics of the second kind}\label{H2}

For $s_0\in(-K,K)$ the coordinate surface $s=s_0$ is a closed surface.
An instance of this surface is shown in blue in Figure \ref{fig4}.
If $s_0=0$ this surface is the unit sphere $x^2+y^2+z^2=1$.
If $s_0\in(-K,0)$ the surface $s=s_0$ is given by the part of the cyclidic surface $P_2(x,y,z)=0$
with $P_2$ defined in \eqref{P2} which lies in the unit ball
$B=\{(x,y,z): x^2+y^2+z^2<1\}$.
If $s_0\in(0,K)$ the surface $s=s_0$ is given by the part of the surface $P_2(x,y,z)=0$
which lies outside $\bar B$.
The surface $s=s_0$ encloses the bounded domain $D_2$ given by $s\in[-K,s_0)$ in
bi-cyclide coordinates.

The coordinate surfaces $s=s_0$ and $t=t_0$ are connected through the inversion
$M$ at the sphere with center $(0,0,1)$ and radius $\sqrt{2}$.
This inversion is given by
\[ M(x,y,z)=(x^2+y^2+(z-1)^2)^{-1}(2x,2y,x^2+y^2+z^2-1) .\]
If $(x,y,z)$ has bi-cyclide coordinates $(s,t,\phi)$ then \eqref{sums} gives
\[ M(x,y,z)=(u\cos\phi,u\sin\phi,v),\]
where
\[
 u=\frac{\cn(s,k)\cn(t,k')}{1-\dn(s,k)\sn(t,k')},\quad
 v=\frac{\sn(s,k)\dn(t,k')}{1-\dn(s,k)\sn(t,k')}.
\]
Therefore, the point $M(x,y,z)$ has bi-cyclide coordinates $t,s,\phi$ (with $s,t$ exchanged)
with bi-cyclide coordinates taken with respect to the complementary modulus $k'$.
This means that the coordinate surface $s=s_0$ (with respect to $k$) is mapped to the coordinate surface $t=s_0$
(with respect to $k'$).
If $t_0\in(0,K')$ then $M$ maps the domain $D_1$ given by $t>t_0$ with respect to $k$
to the exterior of the domain $D_2$ given by $s>t_0$ with respect to $k'$.

Because of this connection between the coordinate surfaces, the results on bi-cyclide harmonics of the second kind adapted to the domains $D_2$ will be very similar to the ones for bi-cyclide harmonic of the first kind.
Therefore, we will keep the following treatment of bi-cyclide harmonics of the second kind short.

We are looking for harmonic functions $u(x,y,z)$ of the ${\mathcal R}$-separated form \eqref{separation1}
which are harmonic in the union of all $D_2$ with $s_0\in(-K,K)$. This requires that $u$ must be harmonic
on the interval $\{(0,0,z): -b^{-1}<z<b^{-1}\}$ on the $z$-axis.
For $m\in\Z$, $n\in\N_0$, we define internal bi-cyclide harmonics of the second kind by
\[ \GG_{m,n}(x,y,z)=R^{-1/2}W_{|m|-\frac12}^n(-is-K'-iK,k')W_{|m|-\frac12}^n(t,k')\expe^{im\phi} .\]

\begin{thm}\label{t3}
The internal harmonic $\GG_{m,n}(x,y,z)$ is a harmonic function on all of $\R^3$ with the exception of set
$\{(0,0,z): |z|\ge b^{-1}\}$.
\end{thm}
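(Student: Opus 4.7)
The plan is to parallel the proof of Theorem \ref{internal1}, interchanging the roles of $s,t$ and of $k,k'$. There are two stages: show that $\GG_{m,n}$ has the separated form \eqref{separation1} so that Theorem \ref{separation} applies off the $z$-axis, and then establish local boundedness at the remaining points of the $z$-axis in order to invoke the extension lemma following Theorem \ref{internal1}.

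Set $v_1(s):=W_{|m|-\frac12}^n(-is-K'-iK,k')$ and $v_2(t):=W_{|m|-\frac12}^n(t,k')$. The first task is to verify that $v_1$ solves \eqref{lame1} and $v_2$ solves \eqref{lame2} with the common eigenvalue $\lambda=(m^2-\tfrac14)-\Lambda_{|m|-\frac12}^n(k')$. For $v_2$ this is immediate from \eqref{odeW} written at modulus $k'$, together with the identity $\dc^2(t,k')=1+k^2\ssc^2(t,k')$. For $v_1$ the chain rule contributes a sign change in the second derivative because $d\zeta/ds=-i$; the key identity needed is
\[ \dc^2(-is-K'-iK,k')=1-\dc^2(s,k), \]
which I plan to derive from Jacobi's imaginary transformation combined with the quarter-period shift $\dc(\zeta+K'+iK,k')=-k'\sn(\zeta,k')$. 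This identity converts the transformed Lam\'e--Wangerin equation at modulus $k'$ into \eqref{lame1} at modulus $k$ with the claimed $\lambda$. Theorem \ref{separation} then delivers harmonicity of $\GG_{m,n}$ on $\R^3$ minus the $z$-axis.

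For the boundary analysis, mirroring the proof of Theorem \ref{internal1}, I would show that $R^{-1/2}v_1(s)v_2(t)$ is locally bounded at every point of the boundary of the rectangle $(-K,K)\times(-K',K')$ except along the closed segments $\gamma_1$ and $\gamma_5$ (and the point $(K,0)$, which represents the point at infinity in Cartesian coordinates). On the horizontal edges $\gamma_2$ and $\gamma_4$, where $t=\mp K'$, expansion \eqref{propb} combined with \eqref{evenodd} forces $v_2$ to vanish to order $|m|+\tfrac12$, which dominates the $(K'\mp t)^{1/2}$ blow-up of $R^{-1/2}$ coming from $\cn(t,k')\to 0$ in \eqref{RR}. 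On the left edge $\gamma_3$, where $s=-K$, the argument $-is-K'-iK$ of $v_1$ tends to $-K'$, and the same two properties force $v_1$ to vanish to order $(K+s)^{|m|+\frac12}$, dominating the analogous $R^{-1/2}$ singularity. The corners $(-K,\pm K')$ are handled by multiplying both vanishings. Because $\gamma_2\cup\gamma_3\cup\gamma_4$ corresponds under the bijection of Section \ref{coord} to $\Gamma_2\cup\Gamma_3\cup\Gamma_4=\{(0,0,z): |z|<b^{-1}\}$, the lemma following Theorem \ref{internal1} produces a harmonic extension at each such point of the $z$-axis, completing the proof.

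The main technical obstacle is verifying the elliptic-function identity $\dc^2(-is-K'-iK,k')=1-\dc^2(s,k)$ used in Step 1; once it is in hand, the remainder of the argument is an essentially mechanical transposition of the proof of Theorem \ref{internal1}.
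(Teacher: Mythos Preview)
Your plan is correct and is precisely the argument the paper has in mind: the paper omits the proof of Theorem~\ref{t3}, remarking only that the second-kind results are obtained from the first-kind ones by the interchange $s\leftrightarrow t$, $k\leftrightarrow k'$, which is exactly the transposition you carry out. The elliptic identity you flag as the ``main technical obstacle'' is indeed valid: evenness of $\dc$ together with the shift $\dc(\zeta+K'+iK,k')=-k'\sn(\zeta,k')$ and Jacobi's imaginary transformation $\sn(is,k')=i\ssc(s,k)$ give $\dc^2(-is-K'-iK,k')=-k'^{2}\ssc^2(s,k)=1-\dc^2(s,k)$, so Step~1 goes through with $\lambda=(m^2-\tfrac14)-\Lambda_{|m|-\frac12}^n(k')$, and your boundary analysis on $\gamma_2,\gamma_3,\gamma_4$ and the two left corners is the correct analogue of the proof of Theorem~\ref{internal1}.
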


For $m\in\Z$, $n\in\N_0$, we define external bi-cyclide harmonics of the second kind by
\[
 \HH_{m,n}(x,y,z)=R^{-1/2}W_{|m|-\frac12}^n(is-K'-iK),k')W_{|m|-\frac12}^n(t,k')\expe^{im\phi} .
\]
Then $\HH_{m,n}$ is the Kelvin transform of $\GG_{m,n}$ with respect to the unit sphere:
\[ \HH_{m,n}(x,y,z)=\widehat{\GG}_{m,n}(x,y,z) .\]
The function $\HH_{m,n}(x,y,z)$ is harmonic
on $\R^3$ with the exception of the segment $\{(0,0,z): |z|\le b\}$.

Arguing as in Section \ref{A1} we prove the expansion of the reciprocal distance of two points in
bi-cyclide harmonics of the second kind. Alternatively, employing the inversion $M$, the result can be derived directly from
Theorem \ref{expansion1}.

\begin{thm}\label{expansion2}
Let $\r,\r^\ast\in \R^3$ with bi-cyclide coordinates $(s,t,\phi)$ and $(s^\ast,t^\ast,\phi^\ast)$,  respectively.
If $-K<s<s^\ast<K$ then
\begin{equation}\label{expansion2a}
\frac{1}{\|\r-\r^\ast\|}=2\sum_{m\in\Z}\sum_{n=0}^\infty
\frac{1}{w_{m,n}} \GG_{m,n}(\r)\HH_{-m,n}(\r^\ast),
\end{equation}
where $w_{m,n}=U(s)V'(s)-U'(s)V(s)$ is the Wronskian of the functions $U(s)=W_{|m|-\frac12}^n(is-K'-iK,k')$ and
$V(s)=U(-s)$.
\end{thm}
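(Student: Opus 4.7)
The plan is to derive this expansion directly from Theorem \ref{expansion1} via the inversion $M$ introduced at the start of Section \ref{H2}. Recall that $M$ sends a point with bi-cyclide coordinates $(s,t,\phi)$ relative to modulus $k$ to a point with bi-cyclide coordinates $(t,s,\phi)$ relative to the complementary modulus $k'$. Writing $\tilde\r:=M(\r)$ and $\tilde\r^\ast:=M(\r^\ast)$, and noting that the complete elliptic integrals for modulus $k'$ satisfy $K(k')=K'$ and $K(k')'=K$, the hypothesis $-K<s<s^\ast<K$ translates precisely to the hypothesis of Theorem \ref{expansion1} applied to the pair $(\tilde\r^\ast,\tilde\r)$ (in that order, so that the first argument has the larger ``$t$-coordinate'' in the new coordinate system) with modulus $k'$.

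Next I would identify the first-kind bi-cyclide harmonics for modulus $k'$ evaluated at the inverted points with the second-kind harmonics for modulus $k$ evaluated at the original points. Reading the definitions \eqref{definternal1} and \eqref{defexternal1} with $k$ replaced by $k'$ and exchanging the roles of the two coordinates via $M$, one checks by direct substitution that the first-kind internal harmonic (modulus $k'$) at $\tilde\r^\ast$ equals $(R^\ast/\tilde R^\ast)^{1/2}$ times the second-kind external harmonic $\HH_{m,n}(\r^\ast)$, and similarly the first-kind external harmonic (modulus $k'$) at $\tilde\r$ equals $(R/\tilde R)^{1/2}\GG_{-m,n}(\r)$, where $\tilde R,\tilde R^\ast$ are the distances of $\tilde\r,\tilde\r^\ast$ to the $z$-axis. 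Moreover, the Wronskian of Theorem \ref{expansion1} applied with modulus $k'$ coincides literally with the $w_{m,n}$ in Theorem \ref{expansion2}, since both are formed from the same Lam\'e-Wangerin functions $W^n_{|m|-\frac12}(\pm is-K'-iK,k')$.

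Finally I would combine the transported expansion with the standard Kelvin identity
\[
\frac{1}{\|\tilde\r-\tilde\r^\ast\|}=\frac{\|\r-c\|\,\|\r^\ast-c\|}{2\,\|\r-\r^\ast\|},\qquad c=(0,0,1).
\]
A short computation using \eqref{sums} together with the explicit form of the inverted $R$-coordinate $u$ recorded in Section \ref{H2} yields $\|\r-c\|^2=2R/\tilde R$, and the analogous identity for $\r^\ast$. Substituting these, the geometric prefactor
$\sqrt{RR^\ast}/(\|\r-c\|\,\|\r^\ast-c\|\,\sqrt{\tilde R\tilde R^\ast})$
collapses to $1/2$; combined with the factors of $2$ coming from the Kelvin identity and from Theorem \ref{expansion1} this gives the overall leading constant $2$ in the series. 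A relabeling $m\mapsto -m$ (using $w_{-m,n}=w_{m,n}$, since the Lam\'e-Wangerin functions depend only on $|m|$) then puts the expansion in the form \eqref{expansion2a}.

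The main obstacle is pure bookkeeping with the Jacobi elliptic identities needed to verify the prefactor simplification and the two harmonic identifications above; all of the genuine analytic content (convergence of the series, completeness of the basis, and the integral representation of external in terms of internal harmonics) is already packaged into Theorem \ref{expansion1}. As an alternative route, one may bypass the inversion entirely and replicate the Dirichlet-problem/integral-representation argument of Section \ref{A1} verbatim in the region $D_2$ given by $s\in[-K,s_0)$, invoking Lemmas \ref{estimate1} and \ref{estimate2} with modulus $k'$ in place of $k$.
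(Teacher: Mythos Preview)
Your proposal is correct and matches the paper's own treatment. The paper states only that one can either replicate the Section~\ref{A1} argument in the region $D_2$ or derive the result directly from Theorem~\ref{expansion1} via the inversion $M$; you have chosen the latter as your main route (and mention the former as an alternative), supplying the bookkeeping details the paper omits, and your computation of the prefactor via $\|\r-c\|^2=2R/\tilde R$ and the matching of the two Wronskians is accurate.
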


As in Section \ref{A1}, Theorem \ref{expansion2} yields an addition theorem and integral relations for Lam\'e-Wangerin functions.
We omit these results because they can be obtained from Theorems \ref{addnthm1} and \ref{intrel1} by exchanging $k\leftrightarrow k'$, $K\leftrightarrow K'$ and $s\leftrightarrow t$.

\section{The bi-spherical limit of
bi-cyclidic coordinates, $k\to 0$}\label{L0}

\begin{figure}[ht]
\begin{center}
\includegraphics[clip=true,trim={1.3cm 0.4cm 10.9cm 1.2cm},width=5cm]{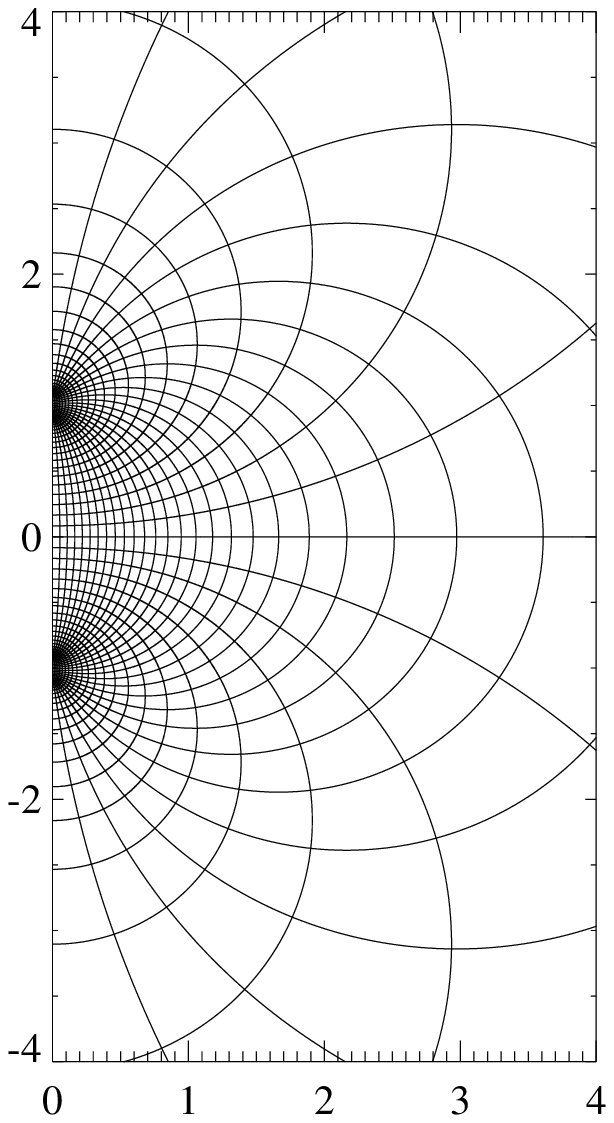}
\includegraphics[clip=true,trim={1.3cm 0.4cm 10.9cm 1.2cm},width=5cm]{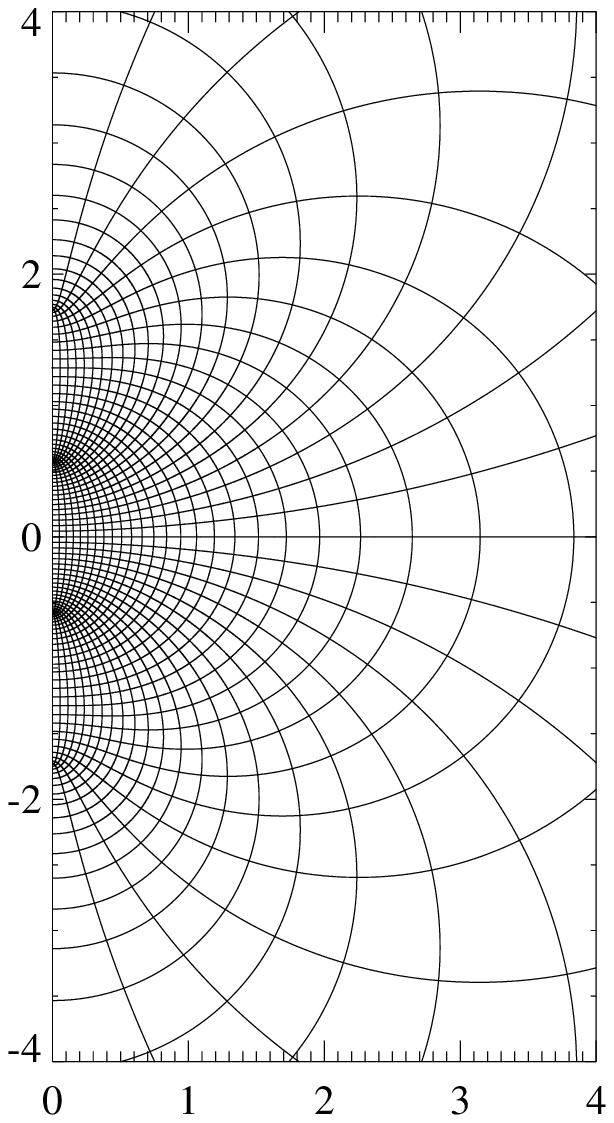}
\includegraphics[clip=true,trim={1.3cm 0.4cm 10.9cm 1.2cm},width=5cm]{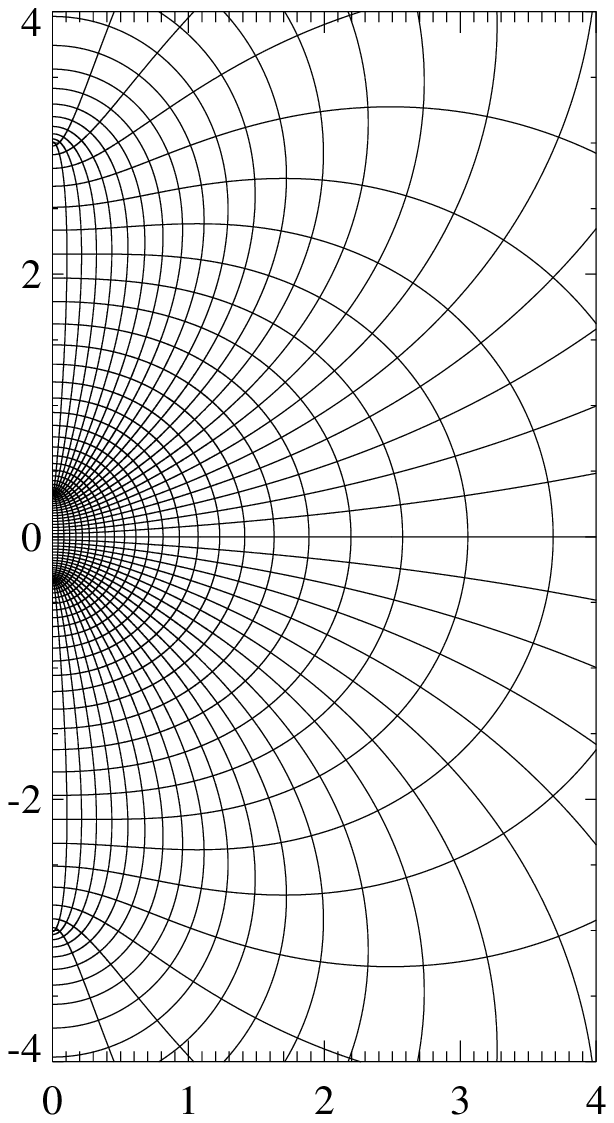}
\caption{In bi-cyclide coordinates, the figures depict coordinate lines for constant values of $s\in(-K,K)$ and $t\in(-K',K')$  with uniform spacing for $k=\frac{1}{10}, \frac{1}{2}, \frac{4}{5}$ respectively from left to right.
The abscissa represents the radial coordinate $R=(x^2+y^2)^{1/2}$ and the ordinate represents the $z$-axis. One can see that as $k$ approaches zero, the bi-cyclidic coordinate system approaches bi-spherical coordinates. Similarly, as $k$ approaches unity, the bi-cyclidic coordinate system approaches spherical coordinates.\label{4:fig1}
}
\end{center}
\end{figure}

In this section we show that bi-cyclide coordinates approach bi-spherical coordinates as $k\to 0$, and
our expansion of the reciprocal distance between two points \eqref{expansion1a} approaches term-by-term the corresponding known expansion in bi-spherical coordinates.

Bi-spherical coordinates $\theta, t, \phi$ \cite[p.~110]{MoonSpencer} are given by
\[ x=\frac{\sin\theta\cos\phi}{\cosh t-\cos\theta},\quad y=\frac{\sin\theta\sin\phi}{\cosh t-\cos\theta},\quad
z=\frac{\sinh t}{\cosh t-\cos\theta},\]
where $t\in\R$, $\theta\in(0,\pi)$, $\phi\in(-\pi,\pi]$.
According to \cite[(10.3.74)]{MorseFeshbach} we have the expansion
\begin{eqnarray*}
&&\hspace{-1.5cm} \frac{1}{\|\r-\r^\ast\|}= (\cosh t-\cos\theta)^{1/2}(\cosh t^\ast-\cos\theta^\ast)^{1/2}\\
&&\hspace{0.5cm}\times\sum_{\ell=0}^\infty \expe^{-(\ell+\frac12)(t-t^\ast)}\sum_{m=-\ell}^\ell\frac{(\ell-m)!}{(\ell+m)!}
\P_\ell^m(\cos\theta)\P_\ell^m(\cos\theta^\ast)\expe^{im(\phi-\phi^\ast)},
\end{eqnarray*}
where $\P_\ell^m$ denotes the Ferrers function of the first kind, $(\theta,t,\phi)$, $(\theta^\ast,t^\ast,\phi^\ast)$ are bi-spherical coordinates
of $\r$, $\r^\ast$, respectively, and it is assumed that $t^\ast<t$.

If a real-valued function $f(\phi)$ with period $2\pi$ is expanded in a complex Fourier series
$f(\phi) =\sum_{m\in\Z} c_m \expe^{im\phi}$,
then we must have $c_{-m}=\overline{c}_m$. In our case, the coefficients $c_m$ are real, so we have $c_{-m}=c_m$.
Therefore, we can write the expansion of $\|\r-\r^\ast\|^{-1}$ in the equivalent form
\[ \frac{1}{\|\r-\r^\ast\|}=\sum_{m\in\Z}
 \expe^{im(\phi-\phi^\ast)}\sum_{n=0}^\infty B_{m,n}(\theta,\theta^\ast,t,t^\ast),\]
where $B_{-m,n}=B_{m,n}$, and, for $m,n\in\N_0$,
\begin{eqnarray*}
&&\hspace{-2.0cm} B_{m,n}=(\cosh t-\cos\theta)^{1/2}(\cosh t^\ast-\cos\theta^\ast)^{1/2}\\
 &&\hspace{1cm}\times \expe^{-(m+n+\frac12)(t-t^\ast)} \frac{n!}{(2m+n)!}\P_{m+n}^{m}(\cos\theta)\P_{m+n}^{m}(\cos\theta^\ast).
\end{eqnarray*}

If we let $k\to 0$ in \eqref{RR}, \eqref{zz}, and observe
\cite[Tables 22.5.3, 22.5.4]{NIST:DLMF},
\[ \sn(s,k)\to \sin s, \quad \cn(s,k)\to \cos s,\quad \dn(s,k)\to 1, \]
\[ \sn(t,k')\to\tanh t,\quad \cn(t,k')\to \sech t,\quad \dn(t,k')\to \sech t,\]
we find that bi-cyclide coordinates approach bi-spherical coordinates with $s=\frac\pi2-\theta$.

Let us write the expansion \eqref{expansion1a} in the form
\[ \frac{1}{\|\r-\r^\ast\|}=\sum_{m\in\Z} \expe^{im(\phi-\phi^\ast)}\sum_{n=0}^\infty A_{m,n}(s,s^\ast,t,t^\ast,k),\]
where $A_{-m,n}=A_{m,n}$, and, for $m,n\in\N_0$,
\begin{eqnarray*}
&&\hspace{-1.5cm} A_{m,n}=\frac{2}{w_m^n}(RR^\ast)^{-1/2}
W_{m-\frac12}^n(s,k) W^n_{m-\frac12}(s^\ast,k)\\
&&\hspace{2.5cm}\times W_{m-\frac12}^n(it-K-iK',k) W_{m-\frac12}^n(-it^\ast-K-iK',k) .
\end{eqnarray*}

The following theorem states the main result of this section.

\begin{thm}\label{limitk0}
Let $m\in\Z$, $n\in\N_0$, $s,s^\ast\in(-\frac12\pi,\tfrac12\pi)$, $t,t^\ast\in\R$. Then
\[ A_{m,n}(s,s^\ast,t,t^\ast,k)\to B_{m,n}(\tfrac12\pi-s,\tfrac12\pi-s^\ast,t,t^\ast)\quad \text{as $k\to 0$} .\]
\end{thm}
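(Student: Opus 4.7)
The strategy is term-by-term: for each fixed $m,n$, compute the $k\to0$ limit of each ingredient in $A_{m,n}$ -- the prefactor $(RR^\ast)^{-1/2}$, the real-argument pair $W^n_{m-\frac12}(s,k)\,W^n_{m-\frac12}(s^\ast,k)$, the imaginary-argument pair $W^n_{m-\frac12}(it-K-iK',k)\,W^n_{m-\frac12}(-it^\ast-K-iK',k)$, and the Wronskian factor $w_{m,n}^{-1}$ -- and verify the product matches $B_{m,n}$. A key simplification, explained below, is that the (nontrivial) multiplicative normalization of the imaginary-argument Lam\'e--Wangerin function cancels against the same normalization in $w_{m,n}$ and so need not be computed.

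For the real-argument factor, as $k\to 0$ one has $K\to\tfrac\pi2$ and $\dc^2(s,k)\to\sec^2 s$, so equation \eqref{odeW} with $\nu=m-\tfrac12$ degenerates on $(-K,K)$ to $W''+(\Lambda-(m^2-\tfrac14)\sec^2 s)W=0$ on $(-\tfrac\pi2,\tfrac\pi2)$. The substitution $\theta=\tfrac\pi2-s$, $W=\sin^{1/2}\theta\,y$ converts the limit equation into the associated Legendre equation for $y=\P^m_{m+n}(\cos\theta)$ with eigenvalue $(m+n+\tfrac12)^2$, and the Frobenius exponent $\nu+1=m+\tfrac12$ at the singular endpoints is preserved in the limit. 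After rescaling the interval to be $k$-independent (e.g.\ $s=K\sigma$, $\sigma\in(-1,1)$), standard continuous-dependence results for singular Sturm--Liouville problems give $\Lambda^n_{m-\frac12}(k)\to(m+n+\tfrac12)^2$ and
\[
 W^n_{m-\frac12}(s,k)\longrightarrow N_{m,n}\,\cos^{1/2}s\,\P^m_{m+n}(\sin s),
\]
uniformly on compact subsets of $(-\tfrac\pi2,\tfrac\pi2)$, where $N_{m,n}^2=(2m+2n+1)n!/(2(2m+n)!)$ is fixed by $L^2$-normalization on $(-\tfrac\pi2,\tfrac\pi2)$.

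For the imaginary-argument factor, write $U(t;k):=W^n_{m-\frac12}(it-K-iK',k)$ and $V(t;k):=U(-t;k)$; both satisfy \eqref{lame2}. For fixed $t$, $k^2\ssc^2(t,k')\to0$ as $k\to0$, so \eqref{lame2} degenerates pointwise to $v''=(m+n+\tfrac12)^2 v$. The recessive conditions at $t=\pm K'$, together with $K'\to+\infty$, should force $U(t;k)\to C_+\,e^{-(m+n+\frac12)t}$ and $V(t;k)\to C_+\,e^{(m+n+\frac12)t}$ uniformly on compact $t$-sets, with the \emph{same} constant $C_+$ because $V(t)=U(-t)$. A direct Wronskian computation on the limits gives $w_{m,n}\to 2(m+n+\tfrac12)\,C_+^2$, so
\[
\frac{2\,U(t;k)\,V(t^\ast;k)}{w_{m,n}}\longrightarrow\frac{e^{-(m+n+\frac12)(t-t^\ast)}}{m+n+\tfrac12},
\]
in which $C_+$ has cancelled. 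The main obstacle is precisely this step: on a fixed compact $t$-set one must show that the growing-exponential mode of $U(t;k)$ is negligible uniformly in $k$, even though the recessive data characterizing $U$ are being pushed to infinity as $k\to0$. This is where the exponential estimates of Lemmas \ref{estimate1}--\ref{estimate2} enter: they provide control on $U'/U$ (via the Riccati reformulation used in their proofs) that pins down the decaying mode throughout the interior.

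To conclude, from \eqref{RR}, $R\to\cos s/(\cosh t-\sin s)$ as $k\to0$, and with $\theta=\tfrac\pi2-s$ one verifies the identity $R^{-1/2}\cos^{1/2}s=(\cosh t-\cos\theta)^{1/2}$, and similarly for starred quantities. Multiplying the previous two limits gives
\[
A_{m,n}\longrightarrow\frac{N_{m,n}^2}{m+n+\tfrac12}(\cosh t-\cos\theta)^{1/2}(\cosh t^\ast-\cos\theta^\ast)^{1/2}\P^m_{m+n}(\cos\theta)\P^m_{m+n}(\cos\theta^\ast)\,e^{-(m+n+\frac12)(t-t^\ast)},
\]
and the arithmetic identity $N_{m,n}^2/(m+n+\tfrac12)=n!/(2m+n)!$ identifies the right-hand side with $B_{m,n}(\tfrac\pi2-s,\tfrac\pi2-s^\ast,t,t^\ast)$, completing the proof.
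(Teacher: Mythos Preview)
Your term-by-term strategy and the final assembly are exactly what the paper does: split $A_{m,n}$ into the factor $(RR^\ast)^{-1/2}$, the product of real-argument Lam\'e--Wangerin functions, and the combination $\frac{2}{w_{m,n}}U(t)V(t^\ast)$, take the $k\to0$ limit of each, and multiply out. Your computation of the normalization constant $N_{m,n}$ and the identity $R^{-1/2}\cos^{1/2}s=(\cosh t-\cos\theta)^{1/2}$ are correct and match the paper's limits \eqref{limit1}--\eqref{limit2} (the paper cites \cite[Corollary~4.4]{BiCohlVolkmerB} for the real-argument limit rather than rederiving it from Sturm--Liouville perturbation theory).

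The genuine gap is the step you flag yourself, the imaginary-argument limit. You invoke Lemmas~\ref{estimate1}--\ref{estimate2}, but these are fixed-$k$ decay estimates for $r_1<r_2$ in $(0,2K')$; they do not by themselves say anything about the $k\to0$ behavior, and in particular do not rule out that the recessive/dominant splitting degenerates as the singular endpoints run off to infinity. Your claim that $U(t;k)\to C_+e^{-(m+n+\frac12)t}$ with a well-defined $C_+$ is also not quite the right formulation: the constant necessarily depends on $k$, so what one actually needs is a \emph{ratio} limit $U(t;k)/U(t_0;k)\to e^{-(m+n+\frac12)(t-t_0)}$ locally uniformly, from which the cancellation in $U(t)V(t^\ast)/w_{m,n}$ follows as you indicate.

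The paper closes this gap differently. It quotes \cite[Theorem~4.7]{BiCohlVolkmerB}, which gives
\[
\frac{W_{m-\frac12}^n(i(\sigma-K'),k)}{W_{m-\frac12}^n(-iK',k)}\longrightarrow e^{-(m+n+\frac12)\sigma}
\]
locally uniformly for $\sigma\in\C$, and then specializes to $\sigma=t+iK$ (noting that the original restriction $|\Im\sigma|<\tfrac\pi2$ is inessential). This is precisely the ratio statement you need, and it handles the shift by $-K$ automatically through the complex argument. If you want to make your sketch rigorous without citing that result, you would have to reprove it: the Riccati idea from Lemmas~\ref{estimate1}--\ref{estimate2} is indeed the right tool, but you must run it with $k$-uniform control, showing that $q(r)=\Lambda_{m-\frac12}^n(k)+\nu(\nu+1)\cs^2(r,k')\to(m+n+\tfrac12)^2$ uniformly on compact $r$-sets bounded away from $0$ and $2K'$, and then pass to the limit in the Riccati comparison to get two-sided bounds on $U'/U$.
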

\begin{proof}
It is sufficient to consider $m\ge 0$. All limits in this proof are taken as $k\to 0$.
We first note that
\begin{equation}\label{limit1}
 R^{-1/2}{R^\ast}^{-1/2}\to \left(\frac{\cos s}{\cosh t-\sin s}\right)^{-1/2}\left(\frac{\cos s^\ast}{\cosh t^\ast-\sin s^\ast }\right)^{-1/2}.
\end{equation}
Using \cite[Corollary~4.4]{BiCohlVolkmerB}, we have
\begin{eqnarray}\label{limit2}
 && \hspace{-1.0cm}W_{m-\frac12}^n(s,k) W^n_{m-\frac12}(s^\ast,k)\nonumber\\
 &&\quad\to (m+n+\tfrac12)\frac{n!}{(2m+n)!}(\cos s)^{1/2}
\P_{m+n}^n(\sin s)(\cos s^\ast)^{1/2} \P_{m+n}^m(\sin s^\ast).
\end{eqnarray}
By \cite[Theorem~4.7]{BiCohlVolkmerB}, we have
\[ \frac{W_{m-\frac12}^n(i(\sigma-K'),k)}{W_{m-\frac12}^n(-iK',k)}\to \expe^{-(m+n+\frac12)\sigma} \]
locally uniformly for $\sigma\in\C$.
Actually, it was assumed there that $|\Im \sigma|<\frac12\pi$ but the proof shows that this restriction is superfluous.
If we set $\sigma=t+iK$ and note that $K(k)\to \frac12\pi$, it follows that
\begin{eqnarray}\label{limit3}
 &&\hspace{-1.2cm}\frac{2}{w_n^m}  W_{m-\frac12}^n(it-K-iK',k) W_{m-\frac12}^n(-it^\ast-K-iK',k)\\
&& \hspace{5cm} \to \frac{\expe^{-(m+n+\frac12)(t+i\frac12\pi)}\expe^{(m+n+\frac12)(t^\ast+i\frac12\pi)}}{m+n+\frac12} .\nonumber
\end{eqnarray}
After multiplying out \eqref{limit1}, \eqref{limit2}, \eqref{limit3} and minor simplification, we obtain the desired statement.
\end{proof}

\section{
The prolate spheroidal limit of
bi-cyclidic coordinates, $k\to 1$
}\label{L1}

If we let $k\to 1$ in \eqref{RR}, \eqref{zz} we find that bi-cyclide coordinates approach spherical coordinates.
However, by changing the limiting process we show that bi-cyclide coordinates can also approach prolate spheroidal coordinates as $k\to 1$.

Prolate spheroidal coordinates \cite[p.~28]{MoonSpencer} are given by
\[ x=\sinh \sigma\sin\theta\cos\phi,\quad y=\sinh \sigma \sin\theta \sin\phi,\quad
z=\cosh \sigma\cos \theta,\]
where $\sigma\in(0,\infty)$, $\theta\in(0,\pi)$, $\phi\in(-\pi,\pi]$.
According to \cite[\S 245]{Hob} we have the expansion
\begin{eqnarray*}
&& \hspace{-2.5cm}\frac{1}{\|\r-\r^\ast\|}= \sum_{\ell=0}^\infty (2\ell+1)
\sum_{m=-\ell}^\ell (-1)^m\left[\frac{(\ell-m)!}{(\ell+m)!}\right]^2\\
&& \times \P_\ell^m(\cos\theta)\P_\ell^m(\cos\theta^\ast)P_\ell^m(\cosh \sigma)Q_\ell^m(\cosh \sigma^\ast)\expe^{im(\phi-\phi^\ast)},
\end{eqnarray*}
where $\P_\ell^m$ denotes the Ferrers function of the first kind, $P_\ell^m$, $Q_\ell^m$ are associated Legendre functions of the first and second kind, respectively,
$(\sigma,\theta,\phi)$, $(\sigma^\ast,\theta^\ast,\phi^\ast)$ are prolate spheroidal coordinates
of $\r$, $\r^\ast$, respectively, and it is assumed that $\sigma<\sigma^\ast$.
We may write the expansion in the equivalent form
\[ \frac{1}{\|\r-\r^\ast\|}=\sum_{m\in\Z}
 \expe^{im(\phi-\phi^\ast)}\sum_{n=0}^\infty B_{m,n}(\sigma,\sigma^\ast,\theta,\theta^\ast),\]
where $B_{-m,n}=B_{m,n}$, and for $m,n\in\N_0$,
\begin{eqnarray*}
&&\hspace{-2.2cm} B_{m,n}=(-1)^m(2m+2n+1)\left[\frac{n!}{(2m+n)!}\right]^2 \\
 &&\times \P_{m+n}^{m}(\cos\theta)\P_{m+n}^{m}(\cos\theta^\ast)P_{m+n}^{m}(\cosh \sigma)Q_{m+n}^{m}(\cosh \sigma^\ast).
\end{eqnarray*}

We modify bi-cyclide coordinates by setting $\sigma=s+K$, $X=\frac{2}{k'} x$, $Y=\frac{2}{k'}y$,
$Z=\frac{2}{k'}z$. The bi-cyclide coordinates $(\sigma,t,\phi)$ of $(x,y,z)$ and $(X,Y,Z)$ are the same.
If we let $k\to 1$, we obtain
\[ \lim_{k\to 1} X=\sinh \sigma\cos t\cos\phi,\, \lim_{k\to 1} Y=\sinh \sigma\cos t\sin\phi,
\, \lim_{k\to 1} Z=\cosh \sigma\sin t ,\]
so we approach prolate spheroidal coordinates with $\theta=\frac12\pi -t$.

Let us write the expansion \eqref{expansion2a} in the form
\begin{equation}\label{expansionA2}
 \frac{1}{\|\r-\r^\ast\|}=\sum_{m\in\Z} \expe^{im(\phi-\phi^\ast)}\sum_{n=0}^\infty A_{m,n}(\sigma,\sigma^\ast,t,t^\ast,k),
 \end{equation}
where $A_{-m,n}=A_{m,n}$, and, for $m,n\in\N_0$,
\begin{eqnarray*}
 A_{m,n}&=&(X^2+Y^2)^{-1/4}({X^\ast}^2+{Y^\ast}^2)^{-1/4}\\
&&\times \frac{2}{w_m^n}W^n_{m-\frac12}(t,k')W_{m-\frac12}^n(t^\ast,k')
W_{m-\frac12}^n (K'-i\sigma,k')W_{m-\frac12}^n(K'-i(2K-\sigma^\ast)),k').
\end{eqnarray*}
In \eqref{expansionA2} we take $\r=(X,Y,Z)$, $\r^\ast=(X^\ast,Y^\ast,Z^\ast)$.

\begin{lemma}\label{asylemma}
Let $\sigma_0>0$, $\nu\ge -\frac12$, $n\in\N_0$. Then we have
\[ \frac{W_\nu^n(K'-i(2K-\sigma),k')}{W_\nu^n(K'-i(2K-\sigma_0),k')}\to
\frac{(\sinh \sigma)^{1/2}Q_{n+\nu+\frac12}^{\nu+\frac12}(\cosh \sigma)}{(\sinh \sigma_0)^{1/2}Q_{n+\nu+\frac12}^{\nu+\frac12}(\cosh \sigma_0)}
\]
as $k\to 1$ locally uniformly for $\Re \sigma>0$.
\end{lemma}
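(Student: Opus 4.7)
The plan is to view $f(\sigma):=W_\nu^n(K'-i(2K-\sigma),k')$ as a solution in the real variable $\sigma$ of a second-order ODE which, in the limit $k\to 1$, collapses to the Liouville normal form of the associated Legendre equation with parameters $\ell=n+\nu+\tfrac12$, $m=\nu+\tfrac12$. Normalizing by $f(\sigma_0)$, the ratio is then forced by uniqueness of the recessive-at-infinity solution to converge to the stated $Q$-ratio.

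Setting $s=K'-i(2K-\sigma)$ and using $d/d\sigma=i\,d/ds$, equation \eqref{odeW} applied with modulus $k'$ gives
\[
f''(\sigma)=\bigl(\Lambda_\nu^n(k')-\nu(\nu+1)\dc^2(s,k')\bigr)f(\sigma).
\]
Since $\dc(\cdot,k')$ has imaginary period $2iK$, we have $\dc^2(s,k')=\dc^2(K'+i\sigma,k')$, and a short computation using the addition formula at $K'$ together with the imaginary-argument identities for Jacobian elliptic functions gives
\[
\dc(K'+i\sigma,k')=i\,\cs(\sigma,k),\qquad \dc^2(K'+i\sigma,k')=-\cs^2(\sigma,k).
\]
Because $\sn(\sigma,k)\to\tanh\sigma$ and $\cn(\sigma,k)\to\sech\sigma$ locally uniformly for $\Re\sigma>0$, we obtain $\cs^2(\sigma,k)\to 1/\sinh^2\sigma$. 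The eigenvalue limit $\Lambda_\nu^n(k')\to(n+\nu+1)^2$ follows from Sturm--Liouville convergence: as $k'\to 0$, \eqref{odeW} limits to $W''+(\Lambda-\nu(\nu+1)\sec^2 s)W=0$ on $(-\tfrac\pi2,\tfrac\pi2)$, whose eigenvalues are classically $(n+\nu+1)^2$ (via the substitution $W=(\cos s)^{\nu+1}U$, $z=\sin s$, reducing the problem to the Gegenbauer equation with $\alpha=\nu+1$). Combining these ingredients, the coefficient of $f$ converges locally uniformly for $\Re\sigma>0$, and the target ODE in the limit is
\[
h''(\sigma)=\Bigl[(n+\nu+1)^2+\frac{\nu(\nu+1)}{\sinh^2\sigma}\Bigr]h(\sigma),
\]
which is the Liouville form (under $h(\sigma)=(\sinh\sigma)^{1/2}\,y(\cosh\sigma)$) of the associated Legendre equation for $\ell=n+\nu+\tfrac12$, $m=\nu+\tfrac12$.

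The final step is to single out the $Q$-solution in the limit via a recessivity argument. Lemma~\ref{estimate1} (for $\nu\ge 0$) and Lemma~\ref{estimate2} (for $\nu=-\tfrac12$), combined with the reflection symmetry $W_\nu^n(\overline{s},k')=\overline{W_\nu^n(s,k')}$, show that $|f(\sigma)/f(\sigma_0)|$ is bounded by $2\exp\bigl(-\omega'(n+\nu+1)(\sigma-\sigma_0)\bigr)$ for $\sigma>\sigma_0$, where $\omega'=\pi/(2K(k'))\to 1$. This matches the asymptotic $(\sinh\sigma)^{1/2}Q^{\nu+1/2}_{n+\nu+1/2}(\cosh\sigma)\sim c\,e^{-(n+\nu+1)\sigma}$ as $\sigma\to\infty$. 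Since the target ODE has a one-dimensional space of solutions decaying at infinity, the normalized limit of $f(\sigma)/f(\sigma_0)$ is forced to equal the stated $Q$-ratio. The principal technical obstacle is promoting the real-variable ODE convergence to locally uniform convergence on the complex half-plane $\{\Re\sigma>0\}$, which requires uniform-in-$k$ bounds on $f(\sigma)/f(\sigma_0)$ on compact subsets combined with a normal-family/Vitali argument to handle complex $\sigma$ from the real-variable estimates of Lemmas~\ref{estimate1}--\ref{estimate2}.
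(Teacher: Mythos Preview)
Your approach is essentially the paper's: derive the second-order ODE in $\sigma$ satisfied by $W_\nu^n(K'-i(2K-\sigma),k')$, show that its coefficients converge as $k\to 1$ to those of the Liouville normal form of the associated Legendre equation with $\ell=n+\nu+\tfrac12$, $m=\nu+\tfrac12$, and then identify the normalized limit as the unique recessive solution. The paper's own proof is extremely terse: it records the ODE in the equivalent form \eqref{ode5} (your $\cs^2(\sigma,k)$ and the paper's $\ns^2(\sigma,k)-1$ agree), cites \cite[Lemma~2.3]{BiCohlVolkmerB} for $\Lambda_\nu^n(k')\to(n+\nu+1)^2$, and then defers the entire convergence/recessivity argument to the proof of \cite[Theorem~7.2]{BiCohlVolkmerA2}, where the same ODE (with a different eigenvalue sequence) was already handled. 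Your sketch is more self-contained---supplying the Gegenbauer reduction for the eigenvalue limit and using Lemmas~\ref{estimate1}--\ref{estimate2} in place of the external reference---but the underlying mechanism is identical.

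Two points where your self-contained route does not quite close. First, Lemmas~\ref{estimate1} and~\ref{estimate2} together cover only $\nu\ge 0$ and $\nu=-\tfrac12$, so the range $\nu\in(-\tfrac12,0)$ in the lemma's hypothesis is left untreated by your decay argument as written. Second, the exponential bound you extract controls $|f(\sigma)/f(\sigma_0)|$ only for $\sigma>\sigma_0$; to run the normal-family/compactness step on all of $\{\Re\sigma>0\}$ you also need a uniform-in-$k$ bound on $g_k'(\sigma_0)$ (equivalently, on $g_k$ for $\sigma<\sigma_0$), which does not follow from those lemmas alone. This is precisely the content hidden in the paper's appeal to \cite[Theorem~7.2]{BiCohlVolkmerA2}; one clean way to recover it is to write $g_k=y_1^{(k)}+c_k y_2^{(k)}$ in a fundamental system normalized at $\sigma_0$, use your decay bound at a single point $\sigma_1>\sigma_0$ to show $c_k=g_k'(\sigma_0)$ stays bounded, and then let standard continuous dependence on coefficients do the rest.
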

\begin{proof}
The function $w(\sigma)=W_\nu^n(K'-i(2K-\sigma),k')$, $0<\sigma<2K$,  satisfies the differential equation
\begin{equation}\label{ode5}
 w''+\left(\nu(\nu+1)-\Lambda_\nu^n(k')-\nu(\nu+1)\frac{1}{\sn^2(\sigma,k)}\right)w=0 .
\end{equation}
By \cite[Lemma 2.3]{BiCohlVolkmerB}, $\Lambda_\nu^k(k')\to (n+\nu+1)^2$ as $k\to 1$.
The differential equation \eqref{ode5} appeared in the proof of \cite[Theorem~7.2]{BiCohlVolkmerA2} with $k'$ in place of $k$.
The sequence $\Lambda_\nu^n(k')$ was replaced by another sequence that converged to $n^2$.
We can now follow the proof of \cite[Theorem~7.2]{BiCohlVolkmerA2} to complete the proof the lemma.
\end{proof}

The main result of this section follows.

\begin{thm}\label{limitk1}
Let $m\in\Z$, $n\in\N_0$, $\sigma,\sigma^\ast\in(0,\infty)$, $t,t^\ast\moro{\in}(-\frac12\pi,\frac12\pi)$ and set $\theta=\frac12\pi-t$, $\theta^\ast=\frac12\pi-t^\ast$. Then
\[ A_{m,n}(\sigma,\sigma^\ast,t,t^\ast,k)\to B_{m,n}(\sigma,\sigma^\ast,\theta,\theta^\ast)\quad \text{as $k\to 1$} .\]
\end{thm}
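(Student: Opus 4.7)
The plan is to parallel the proof of Theorem~\ref{limitk0}: take $m\ge 0$ without loss of generality, let all limits be as $k\to 1$, and split $A_{m,n}$ into the geometric prefactor $(X^2+Y^2)^{-1/4}({X^\ast}^2+{Y^\ast}^2)^{-1/4}$, the real-argument pair $W_{m-\frac12}^n(t,k')W_{m-\frac12}^n(t^\ast,k')$, and the $\sigma$-block
\[
\frac{2}{w_m^n}\, W_{m-\frac12}^n(K'-i\sigma,k')\, W_{m-\frac12}^n(K'-i(2K-\sigma^\ast),k').
\]
I would take the limit of each block separately and multiply.

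The first two blocks are routine. Using $X=(2/k')x$ and the shift identities $\sn(\sigma-K,k)=-\cd(\sigma,k)$, $\cn(\sigma-K,k)=k'\sd(\sigma,k)$ in \eqref{RR}, one checks that $R\sim (k'/2)\sinh\sigma\cos t$ as $k\to 1$, whence
\[ (X^2+Y^2)^{-1/4}({X^\ast}^2+{Y^\ast}^2)^{-1/4}\to (\sinh\sigma\cos t)^{-1/2}(\sinh\sigma^\ast\cos t^\ast)^{-1/2}. \]
For the real-argument pair I would apply \cite[Corollary~4.4]{BiCohlVolkmerB} with the modulus renamed $k'\to 0$, obtaining
\[ W_{m-\frac12}^n(t,k')\,W_{m-\frac12}^n(t^\ast,k')\to (m+n+\tfrac12)\tfrac{n!}{(2m+n)!}(\cos t\cos t^\ast)^{1/2}\P_{m+n}^m(\sin t)\P_{m+n}^m(\sin t^\ast), \]
which matches the Ferrers-function piece of $B_{m,n}$ via $\sin t=\cos\theta$, $\sin t^\ast=\cos\theta^\ast$.

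For the $\sigma$-block, Lemma~\ref{asylemma} directly handles the second $W$-factor (giving the $Q$-function). An entirely parallel argument---using the Riccati/comparison machinery of \cite[Theorem~7.2]{BiCohlVolkmerA2}, now applied at the regular singularity $K'$ itself---yields the companion limit
\[
\frac{W_{m-\frac12}^n(K'-i\sigma,k')}{W_{m-\frac12}^n(K'-i\sigma_0,k')}\to \frac{(\sinh\sigma)^{1/2}P_{m+n}^{m}(\cosh\sigma)}{(\sinh\sigma_0)^{1/2}P_{m+n}^{m}(\cosh\sigma_0)}.
\]
I would state and prove this as a companion to Lemma~\ref{asylemma}. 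As a sanity check, the expansion \eqref{propb} of $W_{m-\frac12}^n$ forces both sides to behave like $\sigma^{m+\frac12}$ near $\sigma=0$, confirming $P$ rather than $Q$ as the correct limit here.

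The hard part is pinning down the limit of $2/w_m^n$ once the reference values $W_{m-\frac12}^n(K'-i\sigma_0,k')$ and $W_{m-\frac12}^n(K'-i(2K-\sigma_0),k')$ have been factored out by the two companion limits. Because $w_m^n$ is independent of the point at which it is evaluated, I would compute it at $s=\sigma_0-K$ using those companion limits together with the classical Wronskian identity for $P_{m+n}^m$ and $Q_{m+n}^m$ (DLMF~14.2), inserting the extra factor $i$ and Jacobian $\sinh\sigma_0$ coming from the change of variable $\sigma\mapsto K'-i\sigma$. Multiplying the three block limits together, the reference-point normalizations cancel, and after the same minor simplification as at the end of Theorem~\ref{limitk0} the surviving constants collapse to $(-1)^m(2m+2n+1)[n!/(2m+n)!]^2$, matching $B_{m,n}(\sigma,\sigma^\ast,\theta,\theta^\ast)$ exactly.
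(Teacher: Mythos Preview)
Your plan is correct and follows the same three-block decomposition as the paper, with identical treatment of the geometric prefactor and the real-argument pair. The one noteworthy difference is in the $\sigma$-block. You propose to establish a companion to Lemma~\ref{asylemma} for the $P$-limit by reworking the Riccati argument of \cite[Theorem~7.2]{BiCohlVolkmerA2} at the singularity $K'$, then track reference-point normalisations and cancel them against the Wronskian at the end. The paper instead packages the whole $\sigma$-block as the single scale-invariant quotient $h=fg/[g,f]$, which is unchanged when $f$ and $g$ are multiplied by constants; this lets it read off the $P$-limit directly from \cite[Corollary~4.4]{BiCohlVolkmerB} (analytically continued from real to the line $K'-i\sigma$) without any bookkeeping of normalisations, and then compute $[G,F]$ once via the standard Legendre Wronskian \cite[(14.2.10)]{NIST:DLMF}. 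Your route works, but the scale-invariance trick is cleaner and avoids the need to formulate and prove a separate companion lemma.
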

\begin{proof}
It is sufficient to consider $m\ge 0$. All limits in this proof are taken as $k\to 1$.
We first note that
\begin{equation}\label{limit4}
(X^2+Y^2)^{-1/4}({X^\ast}^2+{Y^\ast}^2)^{-1/4}\to (\sinh \sigma \sin\theta)^{-1/2} (\sinh \sigma^\ast \sin \theta^\ast)^{-1/2}.
\end{equation}
Using \cite[Corollary~4.4]{BiCohlVolkmerB}, we have
\begin{eqnarray}\label{limit5}
&&\hspace{-1.0cm} 2W^n_{m-\frac12}(t,k')W_{m-\frac12}^n(t^\ast,k')\nonumber\\
&&  \to (2m+2n+1)\frac{n!}{(2m+n)!}(\sin \theta)^{1/2}
\P_{m+n}^n(\cos \theta)(\sin \theta^\ast)^{1/2} \P_{m+n}^m(\cos \theta^\ast) .
\end{eqnarray}
We define two functions
\begin{eqnarray*}
&&\hspace{-7.0cm}f(u,k):= W_{m-\frac12}^n(u,k'),\\
&&\hspace{-7.0cm}g(u^\ast,k):= W_{m-\frac12}^n(K'-i(2K-u^\ast),k') .
\end{eqnarray*}
These functions are well-defined for $u,u^\ast\in(0,2K)$.
Then we consider the expression
\begin{equation}\label{expr2}
h(u,u^\ast,k)=\frac{f(u,k)g(u^\ast,k)}{[g,f]},
\end{equation}
where $w_m^n=[g,f]$ denotes the Wronskian of $g(\cdot,k)$ and $f(\cdot,k)$.
We notice that \eqref{expr2} remains unchanged when we multiply $f$ and/or $g$ by real or complex constants.
Therefore, using \cite[Corollary~4.4]{BiCohlVolkmerB} and Lemma \ref{asylemma} one obtains
\[
h(\sigma,\sigma^\ast,k)\to \frac{F(\sigma)G(\sigma^\ast)}{[G,F]},
\]
where
\[ F(\sigma)=(\sinh \sigma)^{1/2}P_{m+n}^m(\cosh \sigma),\quad G(\sigma^\ast)=(\sinh \sigma^\ast)^{1/2} Q_{m+n}^m(\cosh \sigma^\ast). \]
The known Wronskian \cite[(14.2.10)]{NIST:DLMF}
\[ [P_\nu^\mu(x),Q_\nu^\mu(x)]=\expe^{i\mu \pi} \frac{\Gamma(\nu+\mu+1)}{\Gamma(\nu-\mu+1)}\frac{1}{1-x^2}, \]
implies
\[ [G,F]=(-1)^m \frac{(2m+n)!}{n!} .\]
Thus we have shown
\begin{eqnarray}\label{limit6}
&&\hspace{-1cm}\frac{1}{w_m^n}W_{m-\frac12}^n (K'-i\sigma,k')W_{m-\frac12}^n(K'-i(2K-\sigma^\ast),k')\nonumber\\
&&\hspace{0.2cm}\to (-1)^m \frac{n!}{(2m+n)!} (\sinh \sigma)^{1/2}P_{m+n}^n(\cosh \sigma)(\sinh \sigma^\ast)^{1/2} Q_{m+n}^n(\cosh \sigma^\ast).
\end{eqnarray}
After multiplying out \eqref{limit4}, \eqref{limit5}, \eqref{limit6} and minor simplification, we obtain the desired statement.
\end{proof}

\appendix

\section{The bi-cyclide coordinates of Moon and Spencer}

Moon and Spencer \cite[p.~124]{MoonSpencer} define bi-cyclide coordinates $\mu,\nu,\phi$ by
\begin{eqnarray*}\label{MS}
&&\hspace{-6.3cm}x=  \frac{a}{\Lambda} \cn(\mu,\kappa)\dn(\mu,\kappa)\sn(\nu,\kappa')\cn(\nu,\kappa')\cos\phi,\\
&&\hspace{-6.3cm}y=\frac{a}{\Lambda} \cn(\mu,\kappa)\dn(\mu,\kappa)\sn(\nu,\kappa')\cn(\nu,\kappa')\sin\phi,\\
&&\hspace{-6.3cm}z= \frac{a}{\Lambda} \sn(\mu,\kappa)\dn(\nu,\kappa'),
\end{eqnarray*}
where
\[ \Lambda=1-\dn^2(\mu,\kappa)\sn^2(\nu,\kappa'), \]
$a$ is a positive constant, and $\kappa\in(0,1)$, $\kappa'=(1-\kappa^2)^{1/2}$.
Setting $R=(x^2+y^2)^{1/2}$ and using the addition theorem for the Jacobi function $\sn$ \cite[(22.8.1)]{NIST:DLMF},
we can write these coordinates in the complex form
\begin{equation}\label{bi1}
z+iR= a\sn(\mu+i\nu,\kappa) .
\end{equation}
The function $v=\sn(u,\kappa)$ maps the rectangle
\[
 -K(\kappa)<\Re u<K(\kappa), \quad 0<\Im u<K'(\kappa),
 \]
conformally to the half-plane $\Im v>0$.
Therefore, we choose
\begin{equation}\label{range1}
-K(\kappa)<\mu<K(\kappa),\quad 0<\nu<K'(\kappa).
\end{equation}

Wangerin \cite{Wangerin1875} introduced coordinates $\mu,\nu$ in the $(R,z)$-plane by setting $z+i R=af(\mu+i\nu)$
for $f=\cn$, $f=\sn$ and $f=\dn$. Actually, he considers only $f=\cn$ and $f=\dn$ because the coordinates
generated by $\sn$ and $\dn$ are essentially the same. This follows from the identity
\[ \kappa\sn(u,\kappa)=\dn(K'(\kappa)+i K(\kappa)-i u,\kappa') .\]

In this paper we used bi-cyclide coordinates $s\in(-K,K)$, $t\in(-K',K')$ defined by \eqref{RR}, \eqref{zz}.
They can be written in complex form as
\begin{equation}\label{bi2}
 z+iR= i(\ssc(s-it,k)+\nc(s-it,k)) .
\end{equation}

The connection between $\mu,\nu$ and $s,t$ is given by the following theorem.

\begin{thm}\label{MoonSpencer}
Take $a=\kappa^{1/2}$ and $\kappa=\frac{1-k}{1+k}$. Then the coordinates $\mu,\nu$ and $s,t$ of a point $(R,z)$ with $R>0$ are connected by
\[ t=(1+\kappa)\mu,\quad s+K(k)=(1+\kappa) \nu .\]
\end{thm}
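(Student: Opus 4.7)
The plan is to verify the identity in the complex form given by \eqref{bi1} and \eqref{bi2}. Setting $u := \mu + i\nu$, the proposed substitution yields $s - it = -K(k) - i(1+\kappa) u$, so the theorem reduces to the single analytic identity
\[
i\bigl[\ssc(-K(k) - i(1+\kappa) u, k) + \nc(-K(k) - i(1+\kappa) u, k)\bigr] = \kappa^{1/2}\sn(u, \kappa).
\]

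First I would set $w := i(1+\kappa)u$ and apply the half-period shift formulas at $K(k)$ \cite[\S 22.4]{NIST:DLMF}, together with parity, to rewrite the left-hand side in terms of $\cs(w,k)$ and $\ds(w,k)$. Using the Pythagorean relation $\dn^2-\cn^2 = k'^2\sn^2$ to combine these two terms, the expression collapses to
\[
-\,\frac{i k'\,\sn(w,k)}{\dn(w,k)+\cn(w,k)}.
\]
Next I would apply the imaginary-argument transformations \cite[\S 22.6]{NIST:DLMF} with $v := (1+\kappa)u$, clear the resulting $\cn(v,k')$ factors, and obtain
\[
\frac{k'\,\sn((1+\kappa) u, k')}{1 + \dn((1+\kappa) u, k')}.
\]

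The decisive step is the ascending Landen transformation. The hypothesis $\kappa = (1-k)/(1+k)$ is equivalent to $k' = 2\sqrt{\kappa}/(1+\kappa)$, which is precisely the relation between a modulus and its ascending Landen image. The formulas \cite[(22.7.4), (22.7.6)]{NIST:DLMF} applied at $z = (1+\kappa) u$ give
\[
\sn((1+\kappa) u, k') = \frac{(1+\kappa)\sn(u,\kappa)}{1 + \kappa\sn^2(u,\kappa)},\qquad
\dn((1+\kappa) u, k') = \frac{1 - \kappa\sn^2(u,\kappa)}{1 + \kappa\sn^2(u,\kappa)},
\]
so that $1 + \dn((1+\kappa)u, k') = 2/(1+\kappa\sn^2(u,\kappa))$ and the displayed quotient telescopes to $(k'(1+\kappa)/2)\sn(u,\kappa) = \kappa^{1/2}\sn(u,\kappa)$, as required.

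Finally, I would check that the substitution is a bijection between the parameter rectangles by invoking the companion Landen relations $K(k') = (1+\kappa)K(\kappa)$ and $2K(k) = (1+\kappa)K'(\kappa)$, which guarantee that $(\mu,\nu)\in(-K(\kappa),K(\kappa))\times(0,K'(\kappa))$ maps onto $(s,t)\in(-K(k),K(k))\times(-K'(k),K'(k))$. The main obstacle is bookkeeping the chain of shifts, parities, and imaginary-argument transformations without sign errors; once the left-hand side has been reduced to its pre-Landen form, the substantive identity is essentially a one-line consequence of the Landen formulas.
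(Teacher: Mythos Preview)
Your argument is correct and follows essentially the same route as the paper: reduce to a single complex identity between \eqref{bi1} and \eqref{bi2}, apply the half-period shift at $K(k)$, pass to modulus $k'$ via the imaginary-argument transformations, and finish with the Landen step linking $k'$ and $\kappa$; the paper goes directly from $\frac{i}{k'}(\cs(iu,k)-\ds(iu,k))$ to $\frac{1}{k'}(\ns(u,k')-\ds(u,k'))$ and then cites \cite[(22.7.2),(22.7.4)]{NIST:DLMF}, whereas you insert the intermediate Pythagorean simplification before switching moduli, but the substance is the same. One small bookkeeping point: the Landen identities you actually write down are \cite[(22.7.2),(22.7.4)]{NIST:DLMF} (the descending transformation from $k'$ to $\kappa$), not (22.7.6), though of course descending from $k'$ and ascending from $\kappa$ are two names for the same relation.
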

\begin{proof}
The modulus $\kappa$ is the descending Landen transformation of $k'$
so \cite[(19.8.12)]{NIST:DLMF} gives
\[ K'(k)=(1+\kappa)K(\kappa),\quad 2K(k)=(1+\kappa)K'(\kappa).\]
It follows that $\tilde s:=(1+\kappa)\nu-K(k)$, $\tilde t:=(1+\kappa)\mu$ satisfy
$-K(k)<\tilde s<K(k)$, $-K'(k)<\tilde t<K'(k)$.
Therefore, using \eqref{bi1}, \eqref{bi2} and setting $u=(1+\kappa)(\mu+i\nu)$, the theorem will follow from
the identity
\begin{equation}\label{toshow}
\kappa^{1/2}\sn\left(\frac{u}{1+\kappa},\kappa\right)=i\left(\nc(iu+K(k),k)-\ssc(iu+K(k),k)\right) .
\end{equation}
To prove \eqref{toshow} we note that
\begin{eqnarray*}
 &&\hspace{-2.0cm}i(\nc(iu+K(k),k)-\ssc(iu+K(k),k))\\
 &&\hspace{1cm}=\frac{i}{k'}\left(\cs(iu,k)-\ds(iu,k)\right)=\frac{1}{k'}\left(\ns(u,k')-\ds(u,k')\right) .
\end{eqnarray*}
Now \eqref{toshow} follows from \cite[(22.7.2), (22.7.4)]{NIST:DLMF}.
\end{proof}

\bibliographystyle{plain}

\def\cprime{$'$} \def\dbar{\leavevmode\hbox to 0pt{\hskip.2ex \accent"16\hss}d}

\end{document}